\newlength{\rulebreite}
\def\timesover#1#2#3{\ \xymatrix@1@=0pt@M=0pt{ _{#1}&\times&_{#2} \\& ^{#3}&}\ }
\def\otimesover#1#2#3{\ \xymatrix@1@=0pt@M=0pt{ _{#1}&\otimes&_{#2} \\& ^{#3}&}\ }
\theoremstyle{plain}
\newtheorem{thm}{Theorem}
\newtheorem{lem}[thm]{Lemma}
\newtheorem{cor}[thm]{Corollary}
\newtheorem{prop}[thm]{Proposition}
\theoremstyle{definition}
\newtheorem{defn}[thm]{Definition}
\newtheorem{claim}[thm]{Claim}
\numberwithin{thm}{section}
\numberwithin{equation}{section}
\newcommand{\ml}[2]{\begin{multline}\label{#1}#2 \end{multline}}
\newcommand{\ga}[2]{
\begin{gather}\label{#1}#2\end{gather} 
}
\newcommand{\sC}{{\mathcal C}}
\newcommand{\sD}{{\mathcal D}}
\newcommand{\sE}{{\mathcal E}}
\newcommand{\sH}{{\mathcal H}}
\newcommand{\sI}{{\mathcal I}}
\newcommand{\sK}{{\mathcal K}}
\newcommand{\sO}{{\mathcal O}}
\newcommand{\sQ}{{\mathcal Q}}
\newcommand{\C}{{\mathbb C}}
\renewcommand{\H}{{\mathbb H}}
\renewcommand{\P}{{\mathbb P}}
\newcommand{\Q}{{\mathbb Q}}
\newcommand{\R}{{\mathbb R}}
\newcommand{\Z}{{\mathbb Z}}
\begin{document}

\title[]{Algebraic differential characters of flat connections with nilpotent residues}
\author{H\'el\`ene Esnault}
\address{
Universit\"at Duisburg-Essen, Mathematik, 45117 Essen, Germany}
\email{esnault@uni-due.de}
\date{October 20, 2007}
\thanks{Partially supported by  the DFG Leibniz Prize}
\begin{abstract}
We construct unramified algebraic differential characters for flat connections with nilpotent residues along a strict normal crossings divisor. 
\end{abstract}
\maketitle
\section{Introduction}
In \cite{ChS}, Chern and Simons defined  classes $ \hat{c}_n((E,\nabla))\in H^{2n-1}(X, \R/\Z(n))$ for $n\ge 1$ and 
 a flat bundle $(E,\nabla)$ on a $\sC^\infty$ manifold $X$, where $\Z(n):=\Z\cdot (2\pi \sqrt{-1})^n$. Cheeger and Simons defined in \cite{CS} the group of real $\sC^\infty$ differential characters $ \hat{H}^{2n-1}(X, \R/\Z)$, which is an extension of global $\R$-valued $2n$-closed forms with $\Z(n)$-periods  by $
H^{2n-1}(X, \R/\Z(n))$. They show that  the Chern-Simons classes extend to classes $\hat{c}_n((E,\nabla)) \in
\hat{H}^{2n-1}(X, \R/\Z)
$, if $\nabla$ is a (not necessarily flat) connection, such that the associated differential form is the Chern form computing the $n$-th Chern class associated to the curvature of $\nabla$.

If $X$ now is a complex manifold, and $(E,\nabla)$ is a bundle with an algebraic connection, Chern-Simons and Cheeger-Simons invariants give classes  
$\hat{c}_n((E,\nabla))\in \hat{H}^{2n-1}(X_{{\rm an}}, \C/\Z)$ with a similar definition of complex $\sC^\infty$ differential characters. Those classes have been studied by various authors, and most remarquably, it was shown by A. Reznikov 
that if $X$ is projective and $(E,\nabla)$ is flat, then the classes ${\hat c}_n((E,\nabla))$ are torsion, for $n\ge 2$. This answered positively a conjecture by S. Bloch \cite{B}, which echoed a similar conjecture by Cheeger-Simons in the $\sC^\infty$ category \cite{Ch},  \cite{CS}. 

On the other hand, for $X$ a smooth  complex algebraic variety, we defined in \cite{ADC}
the group $AD^n(X)$  of algebraic differential characters. It is easily written as the hypercohomology group $\H^n(X, \sK_n\xrightarrow{d\log} \Omega^n_X\xrightarrow{d} \Omega^{n+1}_X\to \ldots \xrightarrow{d} \Omega^{2n-1}_X)$, where $\sK_n$ is the Zariski sheaf of Milnor $K$-theory which is unramified in codimension 1.  It has the property that it maps to the Chow group $CH^n(X)$, to algebraic closed $2n$-forms
which have $\Z(n)$-periods, and to the complex $\sC^\infty$ differential characters
  $\hat{H}^{2n-1}(X_{{\rm an}}, \C/\Z)$. If $(E,\nabla)$ is a bundle with an algebraic connection, it has classes $c_n((E,\nabla))\in AD^n(X)$ which lift both the Chern classes of $E$ in $CH^n(X)$ and $\hat{c}_n((E,\nabla))$. All those constructions are contravariant in $(X, (E,\nabla))$, the differential characters have an algebra structure, and the classes fulfill the Whitney product formula. They admit a logarithmic version: if $j: U\to  X$ is a (partial) smooth compactification of $U$ such that $D:= X\setminus U$ is a strict normal crossings divisor, one defines the group $AD^n(X, D)=\H^n( X, \sK_n \xrightarrow{d\log} \Omega^n_{ X}(\log D) \xrightarrow{d} \Omega^{n+1}_{ X}(\log D)\to \ldots \xrightarrow{d} \Omega^{2n-1}_{ X}(\log D))$. Obviously one has maps $AD^n( X)\to AD^i( X, D)\to AD^n(U)$. 
The point is that if $(E,\nabla)$
extends a pole free  connection $(E,\nabla)|_U$ to a connection on $X$ with logarithmic poles along $D$, then $c_n((E,\nabla)|_U)\in AD^n(U) $ lifts to well defined classes  $ c_n( (E,  \nabla))\in AD^n( X, D)$ with the same functoriality and additivity properties. 

If $X$ is a smooth algebraic variety defined over a characteristic 0 field, and $X\supset U$ is a smooth (partial) compactification of $U$,  it is computed in \cite[Appendix~B]{EV} that one can express the Atiyah class (\cite{A}) of a bundle  extension $E$ of $E|_U$ in terms the residues of the extension $\nabla$ of $\nabla|_U$ along $D= X\setminus U$. In particular, if $ X$ is projective, $ \nabla$ has logarithmic poles along $D$  and has  nilpotent residues, one obtains that the de Rham Chern classes of $ E$ are zero. If $k=\C$, this implies that the (analytic) Chern classes of $ E$ in Deligne-Beilinson cohomology $H^{2n}_{\sD}( X,\Z(n))$ lie  in the continuous part  $ H^{2n-1}( X_{{\rm an}}, \C/\Z(n))/F^n \subset  H^{2n}_{\sD}( X, \Z(n))$. 

The purpose of this note is to show that this lifting property is in fact stronger:
\begin{thm} \label{thm1}
Let $ X\supset U$ be a smooth (partial) compactification of a  complex variety $U$, such that $D=\sum_j D_j= X\setminus U$ is a strict normal crossings divisor. Let $(E, \nabla)$ be a flat connection with logarithmic poles along  $D$ such that its residues $\Gamma_j$ along $D_j$ are all nilpotent.  Then the classes $c_n(( E,  \nabla)) \in AD^n( X,  D)$ lift to well defined classes
$c_n(( E,  \nabla, \Gamma)) \in AD^n( X)$, which  satisfy the Whitney product formula. More precisely, the classes $c_n(( E,  \nabla, \Gamma))$ lie  in the subgroup $AD^n_{\infty}( X)=\H^n( X, \sK_n\xrightarrow{d\log} \Omega^n_{\bar X}\xrightarrow{d} \Omega^{n+1}_{ X}\to \ldots \xrightarrow{d} \Omega^{{\rm dim}(X)}_{ X})\subset AD^n( X) $ of classes mapping to 0 in $H^{0}( X, \Omega^{ 2n}_X)$. 
\end{thm}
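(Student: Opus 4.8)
The plan is to express the desired lift as the vanishing of a residue obstruction supported on $D$, to annihilate that obstruction using the nilpotency of the $\Gamma_j$ organised through the splitting principle, and finally to obtain the refinement to $AD^n_\infty$ from flatness.

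First I would package the three groups as hypercohomology of complexes of sheaves on $X$. Writing $\sK_n$ in degree $0$ and the form in degree $k\ge 1$, let $\mc C^\bullet$, $\mc C^\bullet_{\log}$ and $\mc C^\bullet_\infty$ be the complexes computing $AD^n(X)$, $AD^n(X,D)$ and $AD^n_\infty(X)$ respectively. The inclusions $\Omega^i_X\inj \Omega^i_X(\log D)$ and the brutal truncation give two short exact sequences of complexes
\[
0\to \mc C^\bullet\to \mc C^\bullet_{\log}\to \mc R^\bullet\to 0,\qquad 0\to \mc T^\bullet\to \mc C^\bullet_\infty\to \mc C^\bullet\to 0,
\]
where $\mc R^\bullet=[\,\Omega^n_X(\log D)/\Omega^n_X\to\cdots\to\Omega^{2n-1}_X(\log D)/\Omega^{2n-1}_X\,]$ is the residue complex, a complex of sheaves supported on $D$, and $\mc T^\bullet=[\,\Omega^{2n}_X\to\cdots\to\Omega^{\dim X}_X\,]$ is concentrated in degrees $\ge n+1$. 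Because $\mc T^\bullet$ has no hypercohomology in degrees $\le n$, the second sequence gives at once the inclusion $AD^n_\infty(X)\inj AD^n(X)$ and identifies its image with the kernel of the connecting map to $\H^{n+1}(X,\mc T^\bullet)$, whose leading term is the curvature map $AD^n(X)\to H^0(X,\Omega^{2n}_X)$. From the first sequence, $c_n((E,\nabla))\in \H^n(X,\mc C^\bullet_{\log})$ lifts to $AD^n(X)=\H^n(X,\mc C^\bullet)$ if and only if its image $\rho_n\in\H^n(X,\mc R^\bullet)$ under the residue map vanishes; here $\rho_n$ is the total residue of the class along $D$. So the whole problem is the computation of $\rho_n$.

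Next I would compute $\rho_n$ by the splitting principle together with the residue formula of \cite[Appendix~B]{EV}. The flag–bundle formula makes $\pi^\ast$ injective on all three $AD$–groups for the flag bundle $\pi\colon F\to X$, on which $\pi^\ast E$ carries a full flag with line–bundle graded pieces $L_i$; the construction of $c_n$ is multiplicative, so it expands as the elementary symmetric function of the rank–one classes $c_1(L_i,\nabla_i)$, and $\rho_n$ expands as the same symmetric function of their residues. The content of \cite[Appendix~B]{EV} is that the residue of these characteristic classes along $D_j$ is a universal polynomial with no constant term in the power sums $\trace(\Gamma_j^{\,k})$, $k\ge 1$, together with classes pulled back from $D_j$. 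This reduces everything to the residues of the $\Gamma_j$. Here the hypothesis enters: since each $\Gamma_j$ is nilpotent, $\trace(\Gamma_j^{\,k})=0$ for every $k\ge 1$, equivalently all its elementary symmetric functions vanish, so every term computing $\rho_n$ vanishes and $\rho_n=0$. Hence $c_n((E,\nabla))$ lifts.

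The lift $c_n((E,\nabla,\Gamma))$ is pinned down canonically by taking the explicit Chern–Simons cocycle attached to $(E,\nabla)$ and deleting its residue terms, which is legitimate precisely because of the vanishing just proved — this is the role of the extra datum $\Gamma$ in the notation. Flatness of $\nabla$ makes the Chern–Weil form $P_n(\nabla)$, which is the de Rham differential of the top component of the cocycle, vanish identically; hence that cocycle is already a cocycle for the longer complex $\mc C^\bullet_\infty$, and the lift lands in $AD^n_\infty(X)$, in particular its image in $H^0(X,\Omega^{2n}_X)$ is zero. Finally, the Whitney formula is inherited from the multiplicativity of the construction together with the fact that the residue of a direct sum, or of an extension, is the block sum of the $\Gamma_j$, again nilpotent, so the obstructions add and the product relation survives the lift.

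The main obstacle I anticipate is the second paragraph: one must make the splitting principle genuinely compatible with the logarithmic flat structure, since the pulled–back connection $\pi^\ast\nabla$ need not preserve the chosen flag, so a block–triangular comparison connection and the corresponding secondary transgression must be controlled; and one must carry out the explicit local analysis of the residue maps at the crossings of $D$ to identify $\rho_n$ with the universal polynomial in the $\trace(\Gamma_j^{\,k})$. Equally delicate is checking that the canonical lift is independent of the auxiliary choices, i.e.\ that the residual ambiguity $\mathrm{im}\big(\H^{n-1}(X,\mc R^\bullet)\to \H^n(X,\mc C^\bullet)\big)$ is removed by the explicit cocycle prescription rather than merely up to it.
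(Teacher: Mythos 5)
Your strategy is genuinely different from the paper's, and it has a gap that the paper's construction is specifically designed to close. The exact sequence $0\to \mc C^\bullet\to \mc C^\bullet_{\log}\to \mc R^\bullet\to 0$ and the vanishing of the obstruction $\rho_n$ would at best prove that \emph{some} lift exists; the theorem asserts a \emph{well defined} lift, and the ambiguity $\mathrm{im}\bigl(\H^{n-1}(X,\mc R^\bullet)\to \H^n(X,\mc C^\bullet)\bigr)$ is not zero in general. You flag this at the end, but your proposed remedy --- taking the Chern--Simons cocycle and ``deleting its residue terms'' --- is not a construction: a logarithmic form admits no canonical global decomposition into a polar part plus a regular part, so deleting residue terms from a cocycle for $c_n((E,\nabla))\in AD^n(X,D)$ does not yield a cocycle for $AD^n(X)$. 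Sections 2 and 3 of the paper exist precisely to solve this: the nilpotency of the commuting residues is used to build canonical filtrations $F_I^\bullet$ on each $E|_{D_I}$ by successive common kernels (Claims \ref{claim2:1}--\ref{claim2:4}), so that the induced connection on $gr(F_I^\bullet)$ has poles only along the transversal part $D^I\cap D_I$; these filtrations give partial sections $Q_I^F$ of the flag bundle over the strata, and the complex $A(n)$ of Definition \ref{defn3:2}, quasi-isomorphic to $\Omega^{\ge n}_X$ by Proposition \ref{prop3:7}, encodes a pole-free form as a compatible family of logarithmic forms on the strata. The explicit cocycle of Lemma \ref{lem3:4} lives in this complex, and that is what makes the lift canonical.

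There is also a problem with the existence step itself. The computation of \cite[Appendix~B]{EV} identifies the Atiyah class, i.e.\ the image of the obstruction in Hodge/de Rham cohomology, in terms of the residues; but $\rho_n$ lives in the hypercohomology of a complex of sheaves supported on $D$ in degrees $n,\dots,2n-1$ and carries strictly more information than the traces $\trace(\Gamma_j^{\,k})$. Moreover, on the full flag bundle the tautological flag of $q^*E$ is \emph{not} adapted to $\Gamma_j$, so the residue of the $\tau$-connection on the line subquotient $\xi^s$ along $q^{-1}(D_j)$ is a nontrivial function on $q^{-1}(D_j)$ (a ``diagonal entry'' of $\Gamma_j$ in a generic flag), not an eigenvalue; nilpotency does not make it vanish. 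Identifying the obstruction with symmetric functions of honest eigenvalues requires first passing to flags adapted to all the $\Gamma_j$ simultaneously, which is again exactly the role of the canonical filtrations and the sections $\lambda_I^F$. So the nilpotency hypothesis must enter through the existence of a full flag of common kernels of the commuting nilpotent endomorphisms, not merely through $\trace(\Gamma_j^{\,k})=0$, and as written neither the vanishing of the obstruction nor the well-definedness of the lift is established.
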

They also fulfill some functoriality property, and one can express what their restriction to the various strata of $D$ precisely are.
 
Let us denote by $\hat{c}_n(( E,  \nabla, \Gamma))$ the image of 
$c_n(( E, \nabla, \Gamma))$ via the regulator map $AD^n( X)\to \hat{H}^{2n-1}( X_{{\rm an}}, \C/\Z)$ defined in \cite{E2} and \cite{ADC}, which restricts to a regulator map $AD^n_{\infty}( X) \to H^{2n-1}( X_{{\rm an}}, \C/\Z(n))$.  
As an immediate consequence, one obtains
\begin{cor}
 Let $(X, ( E, \nabla, \Gamma))$ be as in the theorem. Then the Cheeger-Chern-Simons classes 
$\hat{c}_n((E,\nabla)|_U)\in H^{2n-1}(U_{{\rm an}}, \C/\Z(n))\subset \hat{H}^{2n-1}(U_{{\rm an}}, \C/\Z)$ lift to well defined classes
$\hat{c}_n(( E, \nabla, \Gamma)) \in H^{2n-1}( X_{{\rm an}}, \C/\Z(n)) \subset \hat{H}^{2n-1}( X_{{\rm an}}, \C/\Z)$, with the same properties.
\end{cor}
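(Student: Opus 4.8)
The plan is to transport the statement of Theorem~\ref{thm1} through the regulator map, so that the corollary becomes a formal consequence. First I would \emph{define} the lifted class by
\[
\hat{c}_n((E,\nabla,\Gamma)) := \rho_X\bigl(c_n((E,\nabla,\Gamma))\bigr),
\]
where $\rho_X\colon AD^n(X)\to \hat{H}^{2n-1}(X_{{\rm an}},\C/\Z)$ is the regulator map of \cite{E2},\cite{ADC} and $c_n((E,\nabla,\Gamma))\in AD^n_\infty(X)$ is the well-defined class produced by Theorem~\ref{thm1}. Since the theorem guarantees that this class lies in the subgroup $AD^n_\infty(X)$, and since $\rho_X$ restricts to a map $AD^n_\infty(X)\to H^{2n-1}(X_{{\rm an}},\C/\Z(n))$, the image $\hat{c}_n((E,\nabla,\Gamma))$ automatically lands in the continuous part $H^{2n-1}(X_{{\rm an}},\C/\Z(n))\subset \hat{H}^{2n-1}(X_{{\rm an}},\C/\Z)$, as asserted. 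Well-definedness is immediate: the lift $c_n((E,\nabla,\Gamma))$ is well-defined by the theorem, and $\rho_X$ is a group homomorphism.

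The lifting property would then be read off from contravariance. Writing $j\colon U\to X$ for the open immersion, I would combine the factorization $AD^n(X)\to AD^n(X,D)\to AD^n(U)$ with the compatibility of the regulator with restriction to obtain the commutative square
\[
\begin{CD}
AD^n_\infty(X) @>{\rho_X}>> H^{2n-1}(X_{{\rm an}},\C/\Z(n)) \\
@V{j^*}VV @VV{j^*}V \\
AD^n(U) @>{\rho_U}>> \hat{H}^{2n-1}(U_{{\rm an}},\C/\Z).
\end{CD}
\]
By the construction recalled in the introduction one has $\rho_U\bigl(c_n((E,\nabla)|_U)\bigr)=\hat{c}_n((E,\nabla)|_U)$, while the functoriality built into Theorem~\ref{thm1} gives $j^*c_n((E,\nabla,\Gamma))=c_n((E,\nabla)|_U)$. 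Chasing the diagram yields
\[
j^*\,\hat{c}_n((E,\nabla,\Gamma)) = \rho_U\bigl(j^* c_n((E,\nabla,\Gamma))\bigr) = \rho_U\bigl(c_n((E,\nabla)|_U)\bigr) = \hat{c}_n((E,\nabla)|_U),
\]
so $\hat{c}_n((E,\nabla,\Gamma))$ indeed restricts to the Cheeger-Chern-Simons class on $U$. The Whitney product formula and the functoriality (the phrase ``with the same properties'') then follow formally from the corresponding assertions for the $c_n((E,\nabla,\Gamma))$ in the theorem together with the fact that $\rho_X$ respects the product and pullback structures.

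The only genuine points to verify, and where I expect whatever difficulty there is to concentrate, are the two properties of the regulator invoked above: the commutativity of the square, i.e.\ that $\rho_X$ and $\rho_U$ are compatible with $j^*$, and that $\rho_X$ sends $AD^n_\infty(X)$ into the continuous subgroup $H^{2n-1}(X_{{\rm an}},\C/\Z(n))$ rather than merely into $\hat{H}^{2n-1}$. Both are features of the regulator construction of \cite{E2},\cite{ADC} and not of the present geometric input, so once they are cited the corollary is immediate. It remains only to note that on $U$ the target really is the continuous part, which is exactly the vanishing of the curvature $2n$-form of the flat connection $(E,\nabla)|_U$ and matches the inclusion $H^{2n-1}(U_{{\rm an}},\C/\Z(n))\subset \hat{H}^{2n-1}(U_{{\rm an}},\C/\Z)$ displayed in the statement.
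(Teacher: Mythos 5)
Your proposal is correct and follows essentially the same route as the paper: the paper's proof (see the discussion preceding the corollary and the proof of the second Theorem~3.9) simply applies the regulator map $AD^n(X)\to \hat{H}^{2n-1}(X_{{\rm an}},\C/\Z)$ of \cite{E2}, \cite{ADC}, notes that it is an algebra homomorphism restricting on $AD^n_\infty(X)$ to a map into $H^{2n-1}(X_{{\rm an}},\C/\Z(n))$, and feeds in the class from Theorem~\ref{thm1}. Your write-up merely makes explicit the compatibility square with $j^*$ that the paper leaves implicit.
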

A direct $\sC^\infty$ construction of 
$\hat{c}_n(( E,  \nabla, \Gamma))\in H^{2n-1}(X_{{\rm an}}, \C/\Z(n))$ in the spirit of Cheeger-Chern-Simons has been performed by P. Deligne
and is written in a letter of P. Deligne to the authors of \cite{IS}.
It consists in modifying the given connection $ \nabla$ by a $\sC^\infty$ one form with values in $\sE nd( E)$, so as to obtain a (possibly non-flat) connection without residues along $D$. This modified connection admits classes in $H^{2n-1}(X_{{\rm an}}, \C/\Z(n))\subset \hat{H}^{2n-1}( X_{{\rm an}}, \C/\Z)$. 
That they do not depend on the choice of the one form  relies essentially on the  argument  showing that if $\nabla$ is flat with logarithmic poles along $D$ (and without further conditions on the residues), for $n\ge 2$, the image of $c_n((E,\nabla))$ in $H^0(U, \sH^{2n-1}_{DR})$, where $\sH^j_{DR}$ is the Zariksi sheaf of $j$-th de Rham cohomology,  in fact lies in the unramified cohomology $H^0(X,\sH^{2n-1}_{DR})\subset H^0(U, \sH^{2n-1}_{DR})$. For this, see \cite[Theorem~6.1.1]{BE}.
In the case when $D$ is smooth, J. Iyer and C. Simpson constructed the $\sC^\infty$ classes 
$\hat{c}_n(( E,  \nabla, \Gamma))\in H^{2n-1}(X_{{\rm an}}, \C/\Z(n))$ using the existence of the  
$\sC^\infty$ trivialization of the canonical extension after an \'etale cover, a fact written by Deligne in a letter, together with Deligne's suggestion of considering patched connection. They then show that Reznikov's argument and theorem \cite{R} adapts to those classes.  
Our note is motivated by the question raised in \cite{IS}  on the construction in the general case.

Our algebraic construction in theorem \ref{thm1}   relies on the modified splitting 
principle developed in \cite{E}, \cite{E2} and \cite{ADC} in order to define the classes in $AD^n(X, D)$. Let $q: Q\to  X$ be the complete flag bundle of $ E$. A flat connection on $ E$ 
with logarithmic poles along $D$ defines a map of differential graded algebras  $\tau:\Omega^\bullet_{Q}(\log q^{-1}(D)) \to \sK^\bullet$ where 
$\sK^i=q^*\Omega^i_{ X}(\log D)$ and $\ Rq_*\sK^\bullet=\Omega^\bullet_{X}(\log D)$. 
This defines a partial flat connection $\tau \circ q^*\nabla: q^*  E\to q^*\Omega^1_{ X}(\log D) \otimes_{\sO_{\sQ}} q^* E$ which has the property that it stabilizes all the rank one subquotients of $q^* E$. 
On the other hand, the nilpotency of $\Gamma$ allows to filter the restriction  $E|_{\Sigma}$ to the different strata $\Sigma$ of $D$, in such a way that the restriction $\nabla|_\Sigma: E|_\Sigma
\to \Omega^1_X(\log D)|_\Sigma\otimes E|_{\Sigma}$  of the connection stabilizes the filtration $F^\bullet_{\Sigma}$, and has the following important extra property: the induced flat connection $\nabla|_{\Sigma}$  on $gr(F^\bullet_\Sigma)$ has values in $\Omega^1_{\Sigma}(\log {\rm rest})$, where rest is the interestion with $\Sigma$ of the part of $D$ which is
transversal to $\Sigma$. This fact translates into a sort of stratification of the flag bundle $Q$, where $\tau$ is refined on this stratification and has values in the pull back of $\Omega^1_\Sigma({\rm rest})$. 
Modulo some  geometry in $Q$, the next observation consists 
 in expressing  the sections  $\alpha \in \Omega^i_{ X}$ of forms without poles 
as pairs $\alpha=(\beta \oplus \gamma) \in \Omega^1_{ X}(\log D)\oplus \Omega^1_D$ such that $\beta|_D=\gamma$, where $\Omega^i_D=\Omega^i_{ X}/\Omega^i_{ X}(\log D)(-D)\subset \Omega^i_X(\log D)|_D$. This yields a complex  receiving quasi-isomorphically $\Omega^{\ge i}_X$, which is convenient to define the wished classes. 
\\ \ \\
{\it Acknowledgement:} Our algebraic construction was performed independently of P. Deligne's  $\sC^\infty$ construction sketched above. 
We thank C. Simpson for sending us  afterwards Deligne's letter. We also thank him for pointing out a mistake in an earlier version of this note. We thank E. Viehweg for his encouragement and for  discussions on the subject, which reminded us of the discussions we had when we wrote \cite[Appendix~C]{EV}.

\section{Filtrations}

Let $X$ be a smooth variety defined over a characteristic 0 field $k$. Let $D\subset X$ be a strict normal crossings divisor (i.e. the irreducible components are smooth over $k$), and let $(E,\nabla)$ be a  connection $\nabla: E\to \Omega_X^1(\log D)\otimes E$ with residue $\Gamma$ defined by the composition
\ga{2.1}{\xymatrix{\ar[drr]_{\Gamma} E \ar[rr]^<<<<<<<<<{\nabla} & & \Omega^1_X(\log D) \otimes_{\sO_X} E \ar[d]^{1\otimes {\rm res}} \\
& &  \nu_*  \sO_{D^{(1)}}  \otimes_{\sO_X} E}}
where $D^{(1)}=\sqcup_j D_j$.
The composition of $\Gamma$ with the projection $\nu_*\sO_{D^{(1)}}\to \sO_{D_j}$ 
defines $\Gamma_j: E\to \sO_{D_j}\otimes E$ which factors through $\Gamma_j \in {\rm End}(
\sO_{D_j}\otimes E)$.  
We write 
\ga{2.2}{\Gamma \in {\rm Hom}_{\sO_X}(\sO_D \otimes_{\sO_X} E ,
\nu_*  \sO_{D^{(1)}}  \otimes_{\sO_X} E).}
Recall that if $\nabla$ is integrable, then
\ga{2.3}{[\Gamma_i|_{D_{ij}}, \Gamma_j|_{D_{ij}}]=0.}
We use the notation $D_I=D_{i_1}\cap \ldots \cap D_{i_r}$ if $I=\{i_1,\ldots, i_r\}$, $D=D^I+\sum_{s\in I} D_s $ with $D^I=\sum_{\ell \notin I} D_\ell$.
The connection $\nabla: E\to \Omega^1_X(\log D)\otimes E$  stabilizes $E(-D_j)$, but also $E\otimes \sI_{D_{I}}$, as the K\"ahler differential on $\sO_X$ restricts to a flat $\Omega^1_X(\log(\sum_{s\in I} D_s))$-connection on $ \sI_{D_I}$. Thus $\nabla$ induces a flat connection
\ga{2.4}{\nabla_I: E|_{D_{I}} \to \Omega^1_X(\log D)|_{D_{I}} \otimes E|_{D_{I}}.}
One has the diagram 
\ga{2.5}{\xymatrix{  & \ar[d] \ar[ddr]_>>>>>>>>>{{\rm 1\otimes \Gamma_j}}  \Omega^1_{D_j}(\log (D^j\cap D_j))\otimes E  & \ar[d] \Omega^2_{D_j}(\log (D^j\cap D_j)) \otimes E\\
\ar[dr]_{\Gamma_j} E|_{D_j} \ar[r]^<<<<{\nabla_j} & \ar[d]_{{\rm res}} \Omega^1_X(\log D)|_{D_j} \otimes E \ar[r]^{\nabla_j} & \ar[d]_{{\rm res}} \Omega^2_X(\log D)|_{D_j}\otimes E\\
 &  E|_{D_j}  & \Omega^1_{D_j}(\log (D^j\cap D_j))\otimes E 
}
}
We define $F_j^1={\rm Ker}(\Gamma_j)\subset E|_{D_j}$. It is a coherent subsheaf. $\nabla_j$ sends $F_j^1$ to $\Omega^1_{D_j}(\log D^j\cap D_j)\otimes E$, but because of integrability, the diagram \eqref{2.3} shows that $\nabla_{D_j}$ induces a flat connection $F_j^1\to \Omega^1_{D_j}(\log D^j\cap D_j)\otimes F_j^1$.
\begin{claim} \label{claim2:1}
 $F_j^1\subset E|_{D_j}$ is a subbundle. 
\end{claim}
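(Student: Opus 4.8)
The plan is to prove the stronger statement that $\Gamma_j$ has locally constant rank along $D_j$; granting this, ${\rm Ker}(\Gamma_j)$ is automatically locally a direct summand, hence a subbundle, and we are done. Over the open stratum $U:=D_j\setminus(D^j\cap D_j)$ the induced connection $\nabla_j$ of \eqref{2.4} is an ordinary flat connection, and integrability (equivalently, the commutativity of the two squares in \eqref{2.5}) says precisely that $\Gamma_j$ is horizontal for the connection it induces on ${\rm End}(E|_{D_j})$. A horizontal endomorphism of a flat bundle has locally constant rank, since in a local flat frame its matrix is constant, so ${\rm Ker}(\Gamma_j)|_U$ is already a subbundle. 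The entire difficulty is therefore to control the rank of $\Gamma_j$ along the logarithmic locus $D^j\cap D_j$, which is a divisor (codimension one) in $D_j$; normality of $D_j$ does not let us extend across it for free, and a priori the rank could drop there.

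To package ``constant rank'' I would apply the Lemma below to the exterior powers of $\Gamma_j$. For each $k$ the endomorphism $\Lambda^k\Gamma_j\in{\rm End}(\Lambda^k E|_{D_j})$ is again horizontal for the logarithmic connection induced on $\Lambda^k E|_{D_j}$, whose residues along the branches $D_i\cap D_j$ of $D^j\cap D_j$ are built functorially from the nilpotent residues $\Gamma_i|_{D_{ij}}$ and are therefore still nilpotent. The condition ${\rm rank}(\Gamma_j)\ge k$ is the non-vanishing of the horizontal section $\Lambda^k\Gamma_j$, so the Lemma forces this condition to be either everywhere or nowhere true on $D_j$; letting $k$ vary gives constant rank.

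\emph{Lemma.} If $(V,\nabla^V)$ is a bundle on a smooth variety carrying an integrable connection with logarithmic poles along a normal crossings divisor $Z=\bigcup_a Z_a$, with nilpotent, pairwise commuting residues $N_a$, then a horizontal section $s$ with $s\not\equiv 0$ is nowhere zero. Over $V\setminus Z$ this is the usual fact that a horizontal section is determined by a single value, so its zero locus is open and closed and hence empty on the dense connected complement. At a point $p\in Z$ I would choose local coordinates with $Z=\{x_1\cdots x_r=0\}$ and residues $N_1,\dots,N_r$, restrict $s$ to a generic line $x_a=c_a t$ through $p$, and observe that the Euler field $t\partial_t=\sum_a x_a\partial_{x_a}$ converts the horizontality equations into a single-variable equation $t\partial_t s+(N+tB)\,s=0$ with $N=\sum_a N_a(p)$. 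Here $N$ is nilpotent because a sum of commuting nilpotents is nilpotent, and this is exactly where \eqref{2.3} enters. If $s$ vanished at $p$ to order $m\ge 1$, the lowest-order coefficient $s_m$ would satisfy $(N+m\,{\rm id})s_m=0$, which is impossible since $N+m\,{\rm id}$ is invertible; hence $s(p)\neq 0$.

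Putting these together, $\Gamma_j$ has constant rank on $D_j$, so both ${\rm Ker}(\Gamma_j)=F_j^1$ and ${\rm im}(\Gamma_j)$ are locally free and the inclusion $F_j^1\subset E|_{D_j}$ is locally split, i.e.\ $F_j^1$ is a subbundle. The genuinely delicate point, and the main obstacle, is the passage across the logarithmic locus $D^j\cap D_j$: the horizontality argument alone only yields constant rank on $U$, and it is the nilpotency and commutativity of the residues $\Gamma_i|_{D_{ij}}$, through the invertibility of $N+m\,{\rm id}$ in the leading-order equation, that prevents the rank of $\Gamma_j$ from dropping along $D^j\cap D_j$.
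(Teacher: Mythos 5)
Your argument is correct, but it follows a genuinely different route from the paper. The paper's proof invokes Deligne's Riemann--Hilbert correspondence: after the Lefschetz-principle reduction to $\C$ and to a polydisk, \cite[p.86]{DPSR} gives the local normal form $(V\otimes\sO,\sum_i\Gamma_i^0\,\tfrac{dx_i}{x_i})$ with \emph{constant} nilpotent matrices $\Gamma_i^0$, from which $F_j^1\cong{\rm Ker}(\Gamma_j^0)\otimes\sO_{D_j}$ is immediately a subbundle. You instead prove local constancy of ${\rm rank}(\Gamma_j)$ directly, via a nowhere-vanishing lemma for horizontal sections of log connections with commuting nilpotent residues applied to $\Lambda^k\Gamma_j$; the leading-order computation $(N+m\,{\rm id})s_m=0$ with $N=\sum_a N_a(p)$ nilpotent is exactly the indicial-equation mechanism that underlies Deligne's normal form, so in a sense you are reproving the relevant fragment of \cite{DPSR} rather than citing it. What your approach buys: it is self-contained, it isolates precisely where nilpotency and the commutation relation \eqref{2.3} are used, and the formal power-series version works over any characteristic-$0$ field without the transcendental reduction. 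What the paper's approach buys: the full local normal form is reused wholesale in Claims \ref{claim2:2}--\ref{claim2:4} to handle the entire filtration $F^\bullet_I$, its stability under $\nabla_I$, and the compatibility of refinements in one stroke, whereas your method would have to be iterated on each subquotient. One small bookkeeping point you elide: the connection induced on ${\rm End}(E|_{D_j})$ is a priori $\Omega^1_X(\log D)|_{D_j}$-valued with residue ${\rm ad}_{\Gamma_j}$ along $D_j$ itself; it is because ${\rm ad}_{\Gamma_j}(\Gamma_j)=0$ that the horizontality equation for $\Gamma_j$ actually lands in $\Omega^1_{D_j}(\log(D^j\cap D_j))\otimes{\rm End}(E|_{D_j})$, so that your Lemma (stated for honest log connections on $D_j$) applies. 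This is easily repaired and does not affect the validity of the argument.
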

\begin{proof}
We use  Deligne's Riemann-Hilbert correspondence \cite{DPSR}:
the data are defined over a field of finite type $k_0$ over $\Q$, so embeddable in $\C$, and the question is compatibe with the base changes $\otimes_{k_0}k$ and $\otimes_k \C$. So it is enough to consider the question for the underlying analytic connection on a polydisk $(\Delta^*)^r\times \Delta^s$ with coordinates $x_j$, where $D_j$ is defined by $x_j=0$ for $1\le j\le r$. By the Riemann-Hilbert correspondence, the argument given in \cite[p.86]{DPSR}
shows that the analytic connection is isomorphic  to 
$(V\otimes \sO, \sum_1^r \Gamma_j^0  \frac{dx_i}{x_i})$, where the matrices $\Gamma_j^0$ are constant nilpotent. Thus   $F_j^1$ is isomorphic to 
 $F_j^1(V) \otimes \sO_{D_j}$ on the polydisk, with $F^1_j(V):={\rm Ker}(\Gamma_j^0)$,  thus is a subbundle. 
\end{proof}
We can replace $E|_{D_j}$ by $E|_{D_j}/F_j^1$ in \eqref{2.4} and redo the construction. This defines by pull back $ F^2_j\subset E|_{D_j}\twoheadrightarrow {\rm Ker}(\Gamma_j:  E|_{D_j}/F^1_j \to E|_{D_j}/F^1_j)$ with $F^2_j\supset F_j^1$ etc. 
\begin{claim} \label{claim2:2}
$F^\bullet_j: F^0_j=0\subset F^1_j\subset \ldots \subset F_j^i \subset \ldots \subset F^{r_j}_j= E|_{D_j}$ is a filtration by subbundles with a flat $\Omega^1_{X}(\log D)|_{D_j}$-valued connection,  such that the induced connection $\nabla_j$ on $gr (F_j^\bullet)$ is  flat and 
$\Omega^1_{D_j}(\log D^j\cap D_j)$-valued. (One can also tautologically say that $F^\bullet_j$ refines the (trivial) filtration on $E|_{D_j}$). 
\end{claim}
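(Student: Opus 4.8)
The plan is to recognise the filtration produced by the iteration as the kernel filtration of the nilpotent endomorphism $\Gamma_j\in{\rm End}(E|_{D_j})$ and then to read off all four assertions from the local model already exhibited in the proof of Claim \ref{claim2:1}. Unwinding the iterative definition, one has $F^{i+1}_j=\{s\in E|_{D_j}:\Gamma_j s\in F^i_j\}$, so inductively $F^i_j=\ker(\Gamma^i_j)$, the $i$-th kernel of $\Gamma_j$. As $\Gamma_j$ is nilpotent this filtration is exhaustive and reaches $E|_{D_j}$ at the nilpotency index $r_j$, which frees us from checking the subbundle property step by step on the successive quotients. Since being a subbundle and the vanishing of a residue are local conditions compatible with the base changes $\otimes_{k_0}k$ and $\otimes_k\C$, I would argue exactly as in Claim \ref{claim2:1}: by Deligne's Riemann--Hilbert correspondence it suffices to treat the analytic model on $(\Delta^*)^r\times\Delta^s$.

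In that model $\nabla=d+\omega$ with $\omega=\sum_{i=1}^r\Gamma^0_i\frac{dx_i}{x_i}$ and the $\Gamma^0_i$ constant nilpotent endomorphisms of a vector space $V$; flatness of $\nabla$ amounts, since $d\omega=0$, to $\omega\wedge\omega=\sum_{i<k}[\Gamma^0_i,\Gamma^0_k]\frac{dx_i}{x_i}\wedge\frac{dx_k}{x_k}=0$, i.e.\ to the commutation relations \eqref{2.3}. Here $E|_{D_j}=V\otimes\sO_{D_j}$ and $\Gamma_j$ is the constant operator $\Gamma^0_j$, so $F^i_j=\ker((\Gamma^0_j)^i)\otimes\sO_{D_j}$ is the constant-subspace kernel filtration of $V$ tensored with $\sO_{D_j}$, hence a filtration by subbundles with $F^{r_j}_j=E|_{D_j}$. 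Because each $\Gamma^0_k$ commutes with $\Gamma^0_j$ it preserves every $\ker((\Gamma^0_j)^i)$; therefore $\nabla_j$ stabilises $F^\bullet_j$ and induces on it a flat $\Omega^1_X(\log D)|_{D_j}$-valued connection, flatness passing to the subquotients preserved by $\nabla_j$.

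It remains to identify the residue of the induced connection on $\gr(F^\bullet_j)$. Since $\Gamma^0_j$ maps $\ker((\Gamma^0_j)^i)$ into $\ker((\Gamma^0_j)^{i-1})$, it acts by zero on each graded piece $\bigl(\ker((\Gamma^0_j)^i)/\ker((\Gamma^0_j)^{i-1})\bigr)\otimes\sO_{D_j}$; thus the residue along $D_j$ of the induced connection vanishes, and by the residue sequence $0\to\Omega^1_{D_j}(\log(D^j\cap D_j))\to\Omega^1_X(\log D)|_{D_j}\xrightarrow{{\rm res}}\sO_{D_j}\to0$ underlying \eqref{2.5}, the connection on $\gr(F^\bullet_j)$ takes values in $\Omega^1_{D_j}(\log(D^j\cap D_j))$ and is flat as a subquotient of $\nabla_j$. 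The only step genuinely requiring care is the legitimacy of the reduction, namely that the intrinsically defined algebraic kernels $\ker(\Gamma^i_j)$ are subbundles of the expected rank; but this is precisely the Riemann--Hilbert argument of Claim \ref{claim2:1} applied to the constant matrices $(\Gamma^0_j)^i$, and raises no new difficulty.
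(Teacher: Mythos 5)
Your proof is correct and follows essentially the same route as the paper: the subbundle property is reduced, exactly as in Claim~\ref{claim2:1}, to Deligne's local model $(V\otimes\sO,\ \sum_i\Gamma^0_i\,\frac{dx_i}{x_i})$ with constant commuting nilpotent matrices, and the stabilization and residue statements are read off from the successive-kernel structure of the filtration. The only cosmetic difference is that the paper obtains the $\nabla_j$-stability of $F^\bullet_j$ and the $\Omega^1_{D_j}(\log(D^j\cap D_j))$-valuedness on $gr(F^\bullet_j)$ directly ``by construction'' from diagram~\eqref{2.5} and integrability, whereas you rederive them inside the local model; your identification $F^i_j=\ker(\Gamma_j^i)$ is a clean and accurate repackaging of the iteration.
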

\begin{proof}
By construction, the flat $\Omega^1_X(\log D)|_{D_j}$-valued connection $\nabla_j$ on 
$E|_{D_j}$ respects the filtration and induces a flat $\Omega^1_{D_j}(\log D^j\cap D_j)$-connection on $gr (F_j^\bullet)$. 
 We use the transcendental argument to show that this is a filtration by subbundles. With the notations as in the proof of the claim \ref{claim2:1}, $F^s_j$ is analytically isomorphic to $F_j^s(V)\otimes \sO_{D_j}$, where $F^1_j(V)\subset F^2_j(V)\subset \ldots \subset V$ is the filtration on $V$ defined by the successive kernels of $\Gamma_j^0$, so $F^2_j(V)$ is the inverse image of ${\rm Ker}(\Gamma_j^0)$ on  $V/F^1_j(V)$ etc. 
\end{proof}
The  argument which allows us to construct $F_j^\bullet$  can in  be used to define 
successive refinements on all $E|_{D_I}$. 
We consider now the case  $|I|=r\ge 2$. We  refine the filtrations $F_J^\bullet|_{D_I}$, which have been constructed inductively, where $J\subset I, |J|<r$ . 
In fact, we do the construction directly on $E|_{D_I}$.
We have $r$ linear maps induced by $\Gamma_j$
\ga{2.6}{\Gamma_j|_{D_{I}}: E|_{D_{I}} \xrightarrow{\nabla_{I}} 
\Omega^1_X(\log( D))|_{D_{I}}\otimes E|_{D_{I}} \to  \sO_{D_j}\otimes E|_{D_{I}}=E_{D_{I}}}
 We define 
\ga{2.7}{F_{I}^1=\cap_{j\in I} {\rm Ker}(\Gamma_j|_{D_{I}})
=\cap_{j\in I}F^1_j|_{D_{I}}.}
\begin{claim} \label{claim2:3}
 $F_{I}^1 \subset E|_{D_{I}}$ is a subbundle, stabilized by the connection $\nabla_{I}$, and more precisely one has $\nabla_{I}: F^1_{I}\to \Omega^1_{D_{I}}(\log (D^{I}\cap D_{I}))\otimes F^1_{I}.$
\end{claim}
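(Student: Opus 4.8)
The plan is to follow the transcendental strategy already used in the proofs of Claims~\ref{claim2:1} and~\ref{claim2:2}, now tracking the two kinds of divisor directions separately: the components $D_j$, $j\in I$, whose intersection cuts out $D_I$, and the transversal components assembled in $D^I$. As before, by Deligne's Riemann--Hilbert correspondence the data descend to a field of finite type over $\Q$, hence embed into $\C$, and the statement is compatible with the base changes $\otimes_{k_0}k$ and $\otimes_k\C$; so it suffices to treat the underlying analytic connection near a point $p\in D_I$. There I would choose a polydisk with coordinates in which $D_j=\{x_j=0\}$ for $j\in I$ and the local branches of $D^I$ meeting $D_I$ are $\{y_\ell=0\}$, and invoke the normal form
\[ \nabla = d + \sum_{j\in I}\Gamma_j^0\,\frac{dx_j}{x_j} + \sum_{\ell}\Lambda_\ell^0\,\frac{dy_\ell}{y_\ell}, \]
where the $\Gamma_j^0$ and the transversal residue matrices $\Lambda_\ell^0$ are all constant, nilpotent and \emph{pairwise commuting}, the commutation being forced by the integrability relation \eqref{2.3}.

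First I would settle the subbundle assertion. By \eqref{2.7} and the normal form, $F^1_I = \bigl(\bigcap_{j\in I}{\rm Ker}(\Gamma_j^0)\bigr)\otimes \sO_{D_I}$, i.e. the pull-back to $D_I$ of the constant subspace $W:=\bigcap_{j\in I}{\rm Ker}(\Gamma_j^0)$. Since $W$ has fixed dimension, $F^1_I$ has locally constant rank and is therefore a subbundle of $E|_{D_I}$. This is precisely the step where the transcendental input is genuinely needed, since the naive intersection $\bigcap_{j\in I}F^1_j|_{D_I}$ of subbundles could a priori have jumping rank.

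Next I would read off the behaviour of $\nabla_{I}$ on $F^1_I$ from the same normal form. For a local section $s$ valued in $W$ one has $\Gamma_j^0 s=0$ for every $j\in I$, so the poles in the residue directions $\tfrac{dx_j}{x_j}$ disappear and $\nabla_{I}s = ds + \sum_\ell \Lambda_\ell^0\, s\,\tfrac{dy_\ell}{y_\ell}$ already lies in $\Omega^1_{D_{I}}(\log (D^{I}\cap D_{I}))\otimes E|_{D_I}$, which is the sharpened pole bound. Finally, since each $\Lambda_\ell^0$ commutes with all the $\Gamma_j^0$ by \eqref{2.3}, it preserves $W=\bigcap_{j\in I}{\rm Ker}(\Gamma_j^0)$; together with the fact that $ds$ remains $W$-valued, this shows $\nabla_{I}s$ is valued in $W$, i.e. in $F^1_I$. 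Hence $\nabla_{I}\colon F^1_I\to \Omega^1_{D_{I}}(\log (D^{I}\cap D_{I}))\otimes F^1_I$, which is both the stabilization and the more precise statement.

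I expect the main obstacle to be bookkeeping rather than conceptual: one must keep the $I$-directions (where the residues are killed, so those log poles vanish) cleanly separated from the transversal $D^I$-directions (where the residues survive but, by commutativity, preserve the common kernel $W$), and then match the resulting pole order with the standard presentation of $\Omega^1_{D_{I}}(\log (D^{I}\cap D_{I}))$ as the subsheaf of $\Omega^1_X(\log D)|_{D_I}$ annihilated by the residues along the $D_j$, $j\in I$. Everything else reduces to the commuting-nilpotent linear algebra already exploited in Claims~\ref{claim2:1} and~\ref{claim2:2}.
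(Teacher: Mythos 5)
Your proposal is correct and follows essentially the same route as the paper: the paper's proof simply invokes the analytic normal form from Claim~\ref{claim2:1} (constant commuting nilpotent residue matrices via Deligne's Riemann--Hilbert correspondence) and identifies $F^1_I$ with $F^1_I(V)\otimes\sO_{D_I}$, which is exactly your reduction to the constant subspace $W=\bigcap_{j\in I}{\rm Ker}(\Gamma_j^0)$. Your write-up merely makes explicit the separation of the $I$-directions from the transversal ones and the resulting pole bound, details the paper leaves implicit.
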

\begin{proof}
We argue analytically as in the proof of claim \ref{claim2:1}. 
 With  notations as there, 
 the analytic  $F^1_{I}$ isomorphic to $F^1_{I}(V)\otimes \sO_{D_{I}}$.
\end{proof}
Thus $\nabla_{I}$ induces a flat $\Omega^1_X(\log D)|_{D_{I}}$-valued connection on the quotient $E|_{D_{I}}/F^1_{I}$. 
We define $ F^2_{I}\supset F^1_{I}$ in $E|_{D_{I}}$ to be the inverse image via the projection $E|_{D_{I}}\to E|_{D_{I}}/F^1_{I}$ of $\cap_{j\in I}{\rm Ker}(\Gamma_j|_{D_{I}})
 $, etc. 
\begin{claim} \label{claim2:4}
 The filtration $F^\bullet_{I}: F^0_{I}=0\subset F^1_{I}\subset F^2_{I}\subset \ldots \subset F^{r_{I}}_{I}=E|_{D_{I}}$ is a filtration by subbundles, stabilized by $\nabla_{I}$, such that $\nabla_{I}$ on $gr(F^\bullet_{I})$ is a flat $\Omega^1_{D_{I}}(\log (D^{I}\cap D_{I}))$-valued connection. Furthermore, $F^\bullet_{I}$ refines all $F^\bullet_J|_{D_{I}}$ for all $J\subset I, |J|<r$ and one has  compatibility of the refinements in the sense that if $K\subset J\subset I$, then the refinement $F_I^\bullet$ of $F_K^\bullet|_{D_I}$ is 
the composition of the refinements $F^\bullet_I$ of $F_J^\bullet|_{D_J}$ and $F_J^\bullet$ of $F_K^\bullet|_{D_J}$. 
\end{claim}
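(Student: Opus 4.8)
The plan is to reduce the whole statement to the local analytic model already exploited in the proofs of Claims \ref{claim2:1} and \ref{claim2:3}, and then to a question of linear algebra about commuting nilpotent operators. By Deligne's Riemann--Hilbert correspondence \cite{DPSR}, after the base changes $\otimes_{k_0}k$ and $\otimes_k\C$ and on a polydisc $(\Delta^*)^r\times\Delta^s$ in which the components $D_j$, $j\in I$, are cut out by $x_j=0$, the connection is isomorphic to $(V\otimes\sO,\ \sum_{j\in I}\Gamma^0_j\,\tfrac{dx_j}{x_j})$ with the $\Gamma^0_j$ constant, mutually commuting by \eqref{2.3}, and nilpotent. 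Under this isomorphism each step $F^s_I$ corresponds to $F^s_I(V)\otimes\sO_{D_I}$, where $F^\bullet_I(V)$ is the flag of $V$ produced by the same inductive recipe of iterated common kernels of $\{\Gamma^0_j\}_{j\in I}$. Since each $F^s_I(V)$ is a constant subspace, $F^s_I$ is a subbundle, which disposes of the subbundle assertion for the whole filtration at once.

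For the stabilization by $\nabla_I$ and the nature of the induced connection on $\gr(F^\bullet_I)$ I would argue by induction on the length of the filtration. Claim \ref{claim2:3} gives the statement for $F^1_I$. The connection induced by $\nabla_I$ on the quotient $E|_{D_I}/F^1_I$ is again a flat connection with logarithmic poles along $D$ whose residues are the images $\bar\Gamma_j$ of $\Gamma_j$, hence still nilpotent; applying Claim \ref{claim2:3} to this quotient shows that $F^2_I/F^1_I$ is a subbundle stabilized by $\nabla_I$, and that $F^2_I\subset E|_{D_I}$ is itself a subbundle as an extension of subbundles. Iterating yields the full filtration by $\nabla_I$-stable subbundles. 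The essential point for the graded pieces is that, by the very construction, $\Gamma_j$ carries $F^s_I$ into $F^{s-1}_I$ for every $j\in I$; hence the residue along each $D_j$, $j\in I$, of the connection induced on $\gr(F^\bullet_I)$ vanishes, so this connection picks up no $\tfrac{dx_j}{x_j}$-component and takes values in $\Omega^1_{D_I}(\log(D^I\cap D_I))$. Its flatness is inherited from that of $\nabla_I$, being the connection induced on a subquotient of a flat connection.

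The refinement of the $F^\bullet_J|_{D_I}$ and the cocycle compatibility are again purely a matter of the constant model, where they become assertions about the flags $F^\bullet_S(V)$ attached to the subsets $S\subseteq I$ of the commuting nilpotent family. Writing $F^s_S(V)=\bigcap\{\,{\rm Ker}\,m : m \text{ a degree-}s\text{ monomial in }\{\Gamma^0_j\}_{j\in S}\,\}$ (an equivalent description of the iterated-kernel recipe, valid because the $\Gamma^0_j$ commute), one verifies that for $J\subset I$ the flag $F^\bullet_I(V)$ is compatible with $F^\bullet_J(V)$ and refines it in the sense required by the claim, the decisive input being the commutativity \eqref{2.3}, which guarantees that every operator preserves each step of each such flag. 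The composition law for $K\subset J\subset I$ then amounts to the identity expressing the common-kernel flag attached to $I$ as the successive refinement first through $J$ and then through $I$, a direct verification on $V$.

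I expect this last step to be the main obstacle. The subbundle and graded-connection assertions follow essentially mechanically from the analytic model together with Claim \ref{claim2:3}, but making the refinement statement and its transitivity precise requires a careful bookkeeping of the nested common-kernel subspaces of $\{\Gamma^0_j\}$, and in particular one must check that the direct construction on $E|_{D_I}$ agrees with the one obtained by successively refining through the intermediate strata $D_J$. It is precisely here that the commutativity relation \eqref{2.3} coming from integrability is used in an essential way.
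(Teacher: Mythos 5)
Your reduction to the constant local model $(V\otimes\sO,\ \sum_{j\in I}\Gamma^0_j\,dx_j/x_j)$ via Deligne's Riemann--Hilbert correspondence is exactly the paper's argument, and your first two paragraphs in fact supply more than the paper's own proof does: the paper only records the identification $F^s_I\cong F^s_I(V)\otimes\sO_{D_I}$ and the termination of the filtration (finitely many commuting nilpotents have a common eigenvector), whereas you also make explicit the induction through the successive quotients, the inclusion $\Gamma_j(F^s_I)\subset F^{s-1}_I$ forcing the residues of the induced connection on $gr(F^\bullet_I)$ to vanish for $j\in I$, and the equivalent description of $F^s_I(V)$ as the common kernel of the degree-$s$ monomials in $\{\Gamma^0_j\}_{j\in I}$. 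All of that is correct and consistent with the paper.

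The step you defer --- the ``direct verification on $V$'' of the refinement of the $F^\bullet_J|_{D_I}$ and of its transitivity --- is precisely the step the paper's proof is silent about as well, and your instinct that it is the main obstacle is justified: as literally stated it cannot be carried out. Take $V=k^3$ with basis $e_1,e_2,e_3$, and let $\Gamma^0_1(e_2)=e_1$, $\Gamma^0_1(e_1)=\Gamma^0_1(e_3)=0$, while $\Gamma^0_2(e_3)=e_1$, $\Gamma^0_2(e_1)=\Gamma^0_2(e_2)=0$. These are square-zero and commute, so \eqref{2.3} holds; yet $F^\bullet_{\{1\}}(V)$ is $0\subset\langle e_1,e_3\rangle\subset V$, $F^\bullet_{\{2\}}(V)$ is $0\subset\langle e_1,e_2\rangle\subset V$, and the common-kernel filtration $F^\bullet_{\{1,2\}}(V)$ is $0\subset\langle e_1\rangle\subset V$. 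Since $\langle e_1,e_2\rangle$ and $\langle e_1,e_3\rangle$ are incomparable, no chain of subspaces --- in particular not $F^\bullet_{\{1,2\}}(V)$ --- can contain both restricted filtrations as subchains, so the direct construction does not refine the $F^\bullet_J|_{D_I}$ in the strict sense of every term of the coarser filtration occurring among the terms of the finer one. What does survive of your argument is the weaker compatibility that every $\Gamma^0_i$ preserves every $F^s_J(V)$, so each filtration induces one on the graded pieces of any other; but if the strict sense is what is needed (it is what makes the maps $\lambda^F(I\subset J)$ of Section 3 maps of flag bundles), then your last paragraph is not a bookkeeping exercise but a genuine gap --- one you share with, and have correctly located in, the source.
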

\begin{proof}
 We argue again analytically. Then  $F^s_{I}$ is isomorphic to $F^s_{I}(V)\otimes \sO_{D_I}$ with the same definition. The filtration terminates as finitely many mutually commuting nilpotent endomorphisms on a finite dimensional vector space always have a common eigenvector.

\end{proof}
\begin{defn} \label{defn2:5}
We call $F_I^\bullet$  the canonical filtration of $E|_{D_I}$ associated to $\nabla$, which defines $(gr(F_I^\bullet), \nabla_I, \Gamma_I)$ where $\nabla_I$ is the flat $\Omega^1_I(\log (D^I\cap D_I)$-valued connection on $gr(F_I^\bullet)$, and $\Gamma_I$ is its nilpotent residue along the normalization of $D^I\cap D_I$.  

\end{defn}
 \begin{proof}
 
 \end{proof}
\section{$\tau$-Splittings}
We first define flag bundles. We set $q_I: Q_I\to D_I$ to be the total  flag bundle associated to $E|_{D_I}$. So the pull back of $E|_{D_I}$ to $Q_I$ has a filtration by subbundles such that the associated graded bundle is a sum of rank one bundles $\xi_I^s$ for $s=1,\ldots, N={\rm rank}(E)$. (It is here understood that $D_{\emptyset}=X$, and to simplify, we set $q=q_{\emptyset}: Q\to X, Q_{\emptyset}=Q$). For $J\subset I$, the inclusion $D_I\to D_I$ defines inclusions $i(J\subset I): Q_I\to Q_J$.
The canonical filtrations associated to $\nabla$  allow to define partial sections of the $q_I$. 
As an illustration, let us assume that $I=\{1\}$, thus $D$ is smooth,  and
that  $F^\bullet_{1}$ is a total flag, i.e. the $gr(F_1^\bullet)$ is a sum of rank one bundles. Then $F_1^\bullet$ defines a section $D\xrightarrow{\lambda^F_1} Q$. 

More generally, let us define $G_I^s=F^s_I/F_I^{s-1}$. We define 
\ga{3.1}{\xymatrix{ \ar[d]_{q_I} Q_I & \ar[l]_{\lambda_I^F} Q_I^F \ar[dl]^{q_I^F}\\
D_I}}
using the filtration: recall that $Q_I\to D_I$ is the composition of $\P(E|_{D_I})\to D_I$ with $\P(E')\to \P(E|_{D_I})$ etc., where $E'\to \sO_{\P(E)}\otimes E$ is the rank $(N-1)$ subbundle defined as the kernel to the rank one canonical rank 1 bundle $\xi_I^N(\P(E|_{D_I}))$, the pull back of which to $Q_I$ defines the last graded rank one quotient. Then the quotient  $E|_{D_I}\to G^{r_I}_I$ defines a map $\P(E|_{D_I})\xleftarrow{} \P(G_I^{r_I})$
such that the pull back of $\xi_I^N(\P(E|_{D_I}))$ is $\xi$, where $\xi$ is the canonical rank one bundle. Writing $G'\to G^{r_I}_I$ for the kernel, we redo the same construction for 
$E', G'$ replacing $E|_{D_I}, G^{r_I}_I$ etc. We find this way that the flag bundle of $G^{r_I}_I$ maps to the intermediate step between $D_I$ and $Q_I$
which splits the first $M$ rank one bundles, where $M$ is the rank of $G^{r_I}_I$. Then we continue with the pull back of $G^{r_I-1}_I$  to the flag bundle of $G^{r_I}_I$, replacing $G^{r_I}_I$, and $E''$ replacing $E$, where $E''$ on this intermediate step is the rank $N-M$ bundle which is not yet split. All this is very classical.

We have extra closed embeddings $\lambda^F(I\subset J)$ which come from the refinements 
of the canonical filtrations, which are described in the same way: for $J\subset I$, one has commutative squares
\ga{3.2}{\xymatrix{\ar[d]_{q_J^F} Q_J^F & & \ar[ll]_{\lambda^F(I\subset J)} Q_I^F \ar[d]^{q_I^F}\\
D_J & & \ar[ll]^{i(I\subset J)} D_I
} 
\xymatrix{\ar[d]_{q} Q & & \ar[ll]_{\mu_I} Q_I^F \ar[d]^{q_I^F}\\
X & & \ar[ll]^{i_I} D_I
} 
}
where $i_I=i(\emptyset \subset I), \ \mu_I=\lambda(\emptyset\subset I)$. 

Recall from  \cite{E}, \cite{E2}, \cite{ADC} that $\nabla$ yields a splitting 
$\tau: \Omega^1_Q(\log q^{-1}(D))\to q^*\Omega^1_X(\log D)$, and that flatness of $\nabla$ implies flatness of $\tau$ in the sense that it induces a map of differential graded algebras $(\Omega^\bullet_Q(\log  q^{-1}(D)),d)\to (q^*\Omega^\bullet_X(\log D), d_\tau)$  so in particular, $(Rq_*\Omega^{\ge n}_X(\log D),d)=(\Omega^{\ge n}_X(\log D), d)$. Furthermore, the filtration on $q^*(E)$ which defines the rank one subquotient $\xi^s$ has the property that it is stabilized by $\tau\circ q^*\nabla$, and this defines a  $\tau$-flat connection $\xi^s\to q^*\Omega^1_X(\log D)\otimes \xi^s$. 

The $\tau$-splitting is constructed first on $\P(E)$, with $p: \P(E)\to X$. Then $\tau\circ \nabla$ stabilizes the beginning of the flag  $E'\subset $pull-back of $E$ etc. Concretely, the composition $\Omega^1_{\P(E)/X}(1)\xrightarrow{\nabla} \Omega^1_{\P(E)}\otimes E \xrightarrow{{\rm projection}} \Omega^1_{\P(E)}\otimes \sO_{\P(E)}(1)$ defines the splitting. 
On the other hand, the flat $\Omega^1_X(\log D)|_{D_I}$-valued connection on $G^{r_I}_I$ has values in $\Omega^1_{D_I}(\log (D_I\cap D^I))$.

When we restrict to $\P(G^{r_I}_I)$, then one has a factorization
\ga{3.3}{\xymatrix{\ar[dr]_{\tau} \Omega^1_{\P(E)}(\log p^{-1}(D)) \otimes \sO_{\P(G^{r_I}_I)} \ar[r]^{\tau(G^{r_I}_I)} & \Omega^1_{D_I}(\log (D_I\cap D^I)) \otimes \sO_{\P(G^{r_I}_I)} \ar[d]^{{\rm inj}} \\
& \Omega^1_X(\log D) \otimes  \sO_{\P(G^{r_I}_I)}
}
}
which defines a differential graded algebra $ (\Omega^\bullet_{D_I}(\log (D_I\cap D^I)) \otimes \sO_{\P(G^{r_I}_I)}, d_\tau)$ with total direct image on $D_I$ being $(\Omega^\bullet_{D_I}(\log (D_I\cap D^I)),d) $ and with the property that $\xi$ has a flat connection with values in $\Omega^1_{D_I}(\log (D_I\cap D^I))$, which is compatible with the flat $p^*\Omega^1_X(\log D)$-connection on  $\xi^N$. We can repeat the construction with $D_I\to X$ replaced by $\P(G^{r_I}_I)\to \P(E|_{D_I})$, with $E|_{D_I} \to G^{r_I}_I$ replaced by 
$E'\to G'$ where $E'={\rm Ker}( E|_{D_I}\otimes \sO_{\P(E|_{D_I})}\to \sO(1))$ and 
$G'={\rm Ker}(G^{r_I}_I \to \sO(1)$. This splits the next rank one piece, one still has the splitting as in \eqref{3.3}, and we go on till we reach the total flag bundle to $G^{r_I}_I$. Then we continue with the flag bundle to $G^{r_I-1}_I$ etc. 
We conclude
\begin{claim} \label{claim3:1} One has a factorization 
\ga{3.4}{\xymatrix{\ar[dr]_{\tau} \mu_I^*\Omega^1_{Q}(\log q^{-1}(D))  \ar[r]^{\tau_I} & (q_I^F)^*\Omega^1_{D_I}(\log (D_I\cap D^I)) \ar[d]^{{\rm inj}}  \\
& (q_I^F)^*\Omega^1_X(\log D)|_{D_I} 
}} 
$\tau_I$ defines a differential graded algebra $((q_I^F)^*\Omega^\bullet_{D_I}(\log (D_I\cap D^I)), d_\tau)$ which is a quotient of $\mu_I^*(\Omega^\bullet_{Q}(\log q^{-1}(D)), d)$.
The flat $q^*\Omega^1_X(\log D)$-valued $\tau$-connection on $\xi^s, s=1,\ldots, N$, restricts via 
the splitting $\tau_I$, to a flat $(q_I^F)^*\Omega^1_{D_I}(\log (D^I\cap D_I))$-valued $\tau$-connection on $(\xi^F_I)^s= \mu_I^*\xi^s$.  
\end{claim}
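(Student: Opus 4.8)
The plan is to prove the claim by iterating the elementary refinement encapsulated in the factorization \eqref{3.3}, running through the graded pieces $G_I^s = F_I^s/F_I^{s-1}$ in decreasing order of $s$ and composing the resulting splittings. The single indispensable input is Claim \ref{claim2:4} (equivalently Definition \ref{defn2:5}): the induced connection $\nabla_I$ on $gr(F_I^\bullet)$, and in particular on each $G_I^s$, is flat and takes values in the \emph{tangential} log forms $\Omega^1_{D_I}(\log(D_I \cap D^I))$ rather than the full $\Omega^1_X(\log D)|_{D_I}$. In other words, passing to the graded object kills the residues along the components $D_s$, $s\in I$, which are normal to $D_I$; this is exactly where the nilpotency of $\Gamma$ enters, and it is what makes the refined target $\Omega^1_{D_I}(\log(D_I \cap D^I))$ legitimate.

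First I would recall that $Q_I^F \to D_I$ is built as the tower of total flag bundles of the successive graded pieces $G_I^{r_I}, G_I^{r_I-1}, \ldots$, as described just before the claim; this is the classical geometric bookkeeping and I take it as given. The base step is \eqref{3.3} itself: on $\P(G_I^{r_I})$ the $\tau$-connection on the top graded line $\xi$ factors through $\Omega^1_{D_I}(\log(D_I \cap D^I))$ precisely because $\nabla_I$ on $G_I^{r_I}$ does, yielding the refined splitting fitting in the commuting triangle with the injection into $\Omega^1_X(\log D)|_{D_I}$. I would then repeat the construction verbatim, passing from $E|_{D_I}\to G_I^{r_I}$ to the kernels $E'\to G'$, so as to split off the next rank one piece while keeping the factorization through $\Omega^1_{D_I}(\log(D_I \cap D^I))$, and continue up the full flag of $G_I^{r_I}$ and then down the remaining graded pieces. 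Composing these refinements over all stages produces the map $\tau_I$ on $\mu_I^*\Omega^1_Q(\log q^{-1}(D))$, and the triangle \eqref{3.4} commutes stage by stage, since at each step the refined splitting is, after the injection, the restriction of $\tau$.

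Next I would upgrade to the differential graded algebra statement. Since $\tau$ is a map of differential graded algebras into $(q^*\Omega^\bullet_X(\log D), d_\tau)$ and each refinement is multiplicative, the $\tau_I$ assemble into a surjection of graded algebras onto $(q_I^F)^*\Omega^\bullet_{D_I}(\log(D_I \cap D^I))$; the flatness of the graded connection $\nabla_I$ from Claim \ref{claim2:4} guarantees that the induced differential $d_\tau$ on the refined complex still squares to zero, so it is genuinely a differential graded algebra and, being the image of a surjection, a quotient of $\mu_I^*(\Omega^\bullet_Q(\log q^{-1}(D)), d)$. Finally, for the connection statement, each $(\xi_I^F)^s = \mu_I^*\xi^s$ is the rank one piece split off at the corresponding stage of the tower, and the connection manufactured there is flat with values in $(q_I^F)^*\Omega^1_{D_I}(\log(D^I \cap D_I))$ and compatible, via the injection, with the given $q^*\Omega^1_X(\log D)$-valued $\tau$-connection on $\xi^s$; this is immediate from the per-stage construction.

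The main obstacle is conceptual rather than computational: everything hinges on the residue along each $D_s$, $s\in I$, vanishing on the graded pieces, so that the splitting truly descends to forms on $D_I$ with poles only along the transversal part $D_I \cap D^I$. Once Claim \ref{claim2:4} is invoked, the remainder is the routine, classical iteration over the flag tower together with bookkeeping of the compatibility of the refinements; the flatness assertions all follow from flatness of $\tau$ and of the graded connection, with no further curvature computation required.
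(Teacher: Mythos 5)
Your proposal is correct and takes essentially the same route as the paper: the paper's own argument is precisely the iteration of the factorization \eqref{3.3} over the tower of flag bundles of the successive graded pieces $G_I^{r_I}, G_I^{r_I-1},\ldots$, with Claim \ref{claim2:4} supplying the decisive input that the induced connection on $gr(F_I^\bullet)$ is flat and $\Omega^1_{D_I}(\log (D_I\cap D^I))$-valued. You correctly identify both the role of nilpotency (via the canonical filtration) and the fact that flatness of the graded connection is what makes $d_\tau$ square to zero on the refined complex.
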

\begin{defn} \label{defn3:2}
On $Q$ we define the complex of sheaves $$A(n)=A^n\to A^{n+1}\to \ldots$$ with
\ml{}{A^i= B^i\oplus C^i\\  B^i=\oplus_I (\mu_I)_* (q_I^F)^*\Omega^i_{D_I}(\log (D^I\cap D_I)), \ C^i=\oplus_{I\neq \emptyset}  (\mu_I)_*(q_I^F)^*\Omega^{i-1}_X(\log D)|_{D_I},\notag } where $C^i=0$ for $i=n$.  The differentials $D_\tau$  are defined as follows: $(\oplus_I\beta_I, \oplus_I \gamma_I)$, where $\beta_I \in (\mu_I)_* (q_I^F)^*\Omega^i_{D_I}(\log (D^I\cap D_I)), \gamma_I\in 
(\mu_I)_*(q_I^F)^*\Omega^{i-1}_X(\log D)|_{D_I}$
 is sent to 
\ml{}{\oplus_I d_\tau \beta_I \in (\mu_I)_* (q_I^F)^*\Omega^{i+1}_{D_I}(\log (D^I\cap D_I)),\\ \oplus_I d_\tau \gamma_I +(-1)^i (\mu_I^*\beta-\beta_I) \in (\mu_I)_*(q_I^F)^*\Omega^{i}_X(\log D)|_{D_I}.\notag} 

\end{defn}
Let $\sK_n$ be the image of the Zariski sheaf of Milnor $K$-theory into Milnor $K$-theory $K_n(k(X)$ of the function field (which is the same as Ker($K_n(k(X))\to \oplus K_{n-1}(\kappa(x)))$ on all codimension 1 points $x\in X$). The $\tau$-differential defines $ d_\tau \log: \sK_n\to A^n=B^n$ ($C^n=0$). The image in $A^n$ is $D_\tau$-flat. Thus this defines  $ d_\tau\log: \sK_n\to A(n)[-1]$.

\begin{defn} \label{defn3:3}
We define $\sK_n\Omega^\infty_Q$ to be the complex $ \sK_n\xrightarrow{ d_\tau\log} A(n)[-1]$ and $ \ \sK_n\Omega^\infty_Q \supset (\sK_n\Omega^\infty_Q)_0$ to be the subcomplex $ \sK_n  \xrightarrow{d_\tau\log} A^n_{D_\tau}$, where $A^n_{D_\tau}$ means the subsheaf of $D_\tau$-closed sections. 
\end{defn}
\begin{lem} \label{lem3:4}
The $\tau$-connections on $(\xi_I^F)^s$ define a class $\xi^s(\nabla)\in \H^1(Q, (\sK_1\Omega^\infty_Q)_0)$ with the property that the image of $\xi^s(\nabla)$ in $H^1(Q, \sK_1)$ is $c_1(\xi^s)$. 
\end{lem}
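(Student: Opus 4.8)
The plan is to represent $\xi^s(\nabla)$ by an explicit \v{C}ech cocycle for the hypercohomology of the two-term complex $(\sK_1\Omega^\infty_Q)_0 = [\sK_1 \xrightarrow{d_\tau\log} A^1_{D_\tau}]$, built from a local trivialization of the line bundle $\xi^s$ together with the connection forms of the flat $\tau$-connection. First I would pick an open cover $\{U_\alpha\}$ of $Q$ over which $\xi^s$ is trivialized by frames $e_\alpha$, with transition units $g_{\alpha\beta} \in \sK_1(U_\alpha \cap U_\beta)$; by definition the \v{C}ech $1$-cocycle $\{g_{\alpha\beta}\}$ represents $c_1(\xi^s) \in H^1(Q,\sK_1) = \Pic(Q)$. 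In the frame $e_\alpha$ the $\tau$-connection is recorded by a form $\omega_\alpha$, and I claim this form is naturally a section of $A^1 = B^1$ over $U_\alpha$: its component indexed by $I$ is the connection form of the induced $\tau$-connection on $(\xi_I^F)^s = \mu_I^*\xi^s$, which by Claim \ref{claim3:1} factors through $\tau_I$ and hence lands in the reduced space $(q_I^F)^*\Omega^1_{D_I}(\log(D^I\cap D_I))$.

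Next I would check the two conditions making $\{(g_{\alpha\beta},\omega_\alpha)\}$ a total degree-$1$ cocycle. The cocycle condition on $\{g_{\alpha\beta}\}$ is automatic since $\xi^s$ is a line bundle, and the mixing condition $\omega_\beta - \omega_\alpha = d_\tau\log g_{\alpha\beta}$ is the usual change-of-frame rule for connection forms, read through $\tau$; because $\tau_I$ is a quotient of $\mu_I^*\tau$ by Claim \ref{claim3:1}, this identity holds simultaneously on every stratum and is thus an identity of sections of $A^1$. The remaining point is that $\omega_\alpha$ is actually a section of the degree-$1$ term $A^1_{D_\tau}$, i.e. $D_\tau \omega_\alpha = 0$. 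Since $C^1 = 0$, the definition of $D_\tau$ shows this is equivalent to two conditions: that each component is $d_\tau$-closed, which is precisely the flatness of the $\tau$-connection on $(\xi_I^F)^s$ asserted in Claim \ref{claim3:1} (and is what places us in the subcomplex $(\cdot)_0$ rather than in all of $\sK_1\Omega^\infty_Q$); and that $\mu_I^*\omega_\alpha^{(\emptyset)} = \omega_\alpha^{(I)}$ for all $I \neq \emptyset$, expressing that the stratum forms are the reductions of the global form.

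Granting this, $\{(g_{\alpha\beta},\omega_\alpha)\}$ defines the class $\xi^s(\nabla) \in \H^1(Q,(\sK_1\Omega^\infty_Q)_0)$; that it is independent of the cover and the chosen frames up to a coboundary is the standard verification (a refinement of the cover, resp. a change $e_\alpha \mapsto h_\alpha e_\alpha$, alters the cocycle by the coboundary of $(1, d_\tau\log h_\alpha)$). Finally, the projection of complexes $(\sK_1\Omega^\infty_Q)_0 \to \sK_1$ which is the identity on $\sK_1$ and zero on $A^1_{D_\tau}$ sends $\xi^s(\nabla)$ to the class of $\{g_{\alpha\beta}\}$, namely $c_1(\xi^s)$, proving the last assertion.

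The step I expect to be the main obstacle is the closedness $D_\tau\omega_\alpha = 0$, and within it the vanishing of the $C^2$-component $(-1)^1(\mu_I^*\omega_\alpha^{(\emptyset)} - \omega_\alpha^{(I)})$ for every stratum $I$. This is genuine bookkeeping over the poset of the $I$: one must use both the stratified $\tau$-splitting of Claim \ref{claim3:1}, which guarantees that restriction of the global connection form reduces its pole order from $\Omega^1_X(\log D)|_{D_I}$ to $\Omega^1_{D_I}(\log(D^I\cap D_I))$ on the stratum (this reduction, forced by the nilpotency of the residues, is exactly what lands the class in the ``no transversal poles'' complex), and the compatibility of the refined filtrations $F_I^\bullet$ of $F_J^\bullet|_{D_I}$ from Claim \ref{claim2:4}, which ensures these reductions are coherent across nested strata.
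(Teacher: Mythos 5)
Your proposal is correct and follows essentially the same route as the paper: both represent $\xi^s(\nabla)$ by the \v{C}ech cocycle $(g^s_{\alpha\beta},\omega^s_\alpha)$ built from transition units and the local $\tau$-connection forms, and both invoke Claim \ref{claim3:1} to see that the restrictions $\mu_I^*\omega^s_\alpha$ factor through $(q_I^F)^*\Omega^1_{D_I}(\log(D^I\cap D_I))$ and remain $\tau$-flat, which is exactly the $D_\tau$-closedness (the vanishing of the $C^2$-component being automatic because the $I$-component is defined as the restriction of the global form). The final projection onto $\sK_1$ recovering $c_1(\xi^s)$ is also as in the paper.
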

\begin{proof}
 The cocycle of the class $\xi^s(\nabla)$ results from the claim \ref{claim3:1}.  Write $g_{\alpha \beta}^s$ for a $\sK_1$-valued 1-cocyle for $\xi^s$. Then the flat $\tau$-connection on $\xi^s$ is defined by local sections $\omega^s_\alpha$ in $q^*\Omega^1_X(\log D)$ which are $d_\tau$ flat for  $d_\tau: q^*\Omega^1_X(\log D)\to q^*\Omega^2_X(\log D)$. So the cocyle condition reads $d_\tau \log g_{\alpha \beta}^s=\delta (\omega^s)_{\alpha \beta}$ where $\delta$ is the Cech differential. The claim \ref{claim3:1} implies then that $\mu^*_I(\omega^s_{\alpha}) \in  (q_I^F)^*\Omega^1_{D_I}(\log (D_I\cap D^I))$, is $\tau$-flat and one has 
$d_\tau \log \mu_I^*(g_{\alpha \beta}^s)=\delta \mu^*_I(\omega^s)_{\alpha \beta}$. So the class 
$(\xi_I^F)^s$ is defined by the Cech cocyle $(g^s_{\alpha \beta},\mu_I^* \omega^s\oplus 0)$, with $\mu_I^* \omega^s \in B^1, 0\in C^1$. 

\end{proof}
We define a product 
\ga{3.5}{(\sK_m\Omega_{Q}^\infty)_0\times (\sK_n\Omega_{Q}^\infty)_0 \xrightarrow{\cup}
(\sK_{m+n}\Omega_{Q}^\infty)_0}
by using the formulae defined in \cite[Definition~2.1.1]{ADC}, that is 
\ga{3.6}{x\cup y=\begin{cases}
\{x,y\} & x\in \sK_m, y\in \sK_n\\
d_\tau \log x \wedge y \oplus  d_\tau \log x\wedge y & x\in \sK_m, y \in (B^n\oplus C^n)_{D_\tau}\\
0 & {\rm else}.
                \end{cases}
}
The product is well defined. 
\begin{defn} \label{defn3:5}
 We define $c_n(q^*(E,\nabla,\Gamma))\in \H^n(Q, \sK_n\Omega_{Q}^\infty))$ to be the image via the map  $\H^n(Q, (\sK_n\Omega_{Q}^\infty)_0)\to \H^n(Q, \sK_n\Omega_{Q}^\infty)$ of 
$$\sum_{s_1<s_2\ldots <s_n} \xi^{s_1}(\nabla)\cup \cdots \cup \xi^{s_n} (\nabla).$$

\end{defn}
\begin{defn} \label{defn3:6}
On $X$ we define the complex of sheaves $$A_X(n)=A_X^n\to A_X^{n+1}\to \ldots$$ with
\ml{}{A_X^i= B_X^i\oplus C_X^i\\  B_X^i=\oplus_I (i_I)_* \Omega^i_{D_I}(\log (D^I\cap D_I)), \ C^i_X=\oplus_{I\neq \emptyset}   (i_I)_*\Omega^{i-1}_X(\log D)|_{D_I},\notag } 
where $C_X^i=0$ for $i=n$.  The differentials $D_X$  are defined as follows: $(\oplus_I\beta_I, \oplus_I \gamma_I)$, where $\beta_I \in (i_I)_* \Omega^i_{D_I}(\log (D^I\cap D_I)), \gamma_I\in 
(i_I)_*\Omega^{i-1}_X(\log D)|_{D_I}$
 is sent to 
\ml{}{\oplus_I d \beta_I \in (i_I)_* \Omega^{i+1}_{D_I}(\log (D^I\cap D_I)),\\ \oplus_I d \gamma_I +(-1)^i (i_I^*\beta-\beta_I) \in (i_I)_*\Omega^{i}_X(\log D)|_{D_I},\notag} 
where the differentials $d_\tau$ are the $\tau$ differentials in the various differential graded algebras  $\Omega^\bullet_{D_I}(\log (D^I\cap D_I))$. 

\end{defn}
 One has an injective morphism of complexes 
\ga{3.7}{\iota:\Omega^{\ge n}_X\to A_X^{\ge n}}
sending $\alpha\in \Omega^i_X$ to $i_I^*\alpha \oplus 0$. 

\begin{prop} \label{prop3:7}
 The morphism $\iota$ is a quasi-isomorphism. Furthermore, one has 
$Rq_*A(n)=A_X(n)$. 
\end{prop}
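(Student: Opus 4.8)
The plan is to prove the two statements separately but with the same underlying mechanism: an explicit homotopy/filtration argument for the quasi-isomorphism, and a degeneration of direct images for the second.

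\textbf{The quasi-isomorphism $\iota$.}
First I would observe that the complex $A_X(n)$ is built as a total complex of a double complex whose ``columns'' are indexed by the poset of subsets $I$, with $B^i_X$ collecting the honest log-forms on the strata $D_I$ and $C^i_X$ collecting the restrictions $\Omega^{i-1}_X(\log D)|_{D_I}$ that glue them. The key point is the shape of the differential $D_X$: on the $C$-part it involves the term $(-1)^i(i_I^*\beta - \beta_I)$, which is precisely the \emph{difference} between the restriction to $D_I$ of a form on the next bigger stratum and the intrinsic log-form $\beta_I$ on $D_I$. I would interpret this as a mapping-cone/Čech-type construction: a section in the kernel of $D_X$ is forced to have its various $\beta_I$ compatible under restriction (up to the exactness measured by the $\gamma_I$), so that the whole datum is equivalent to a single global form with no poles. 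Concretely, for the stratum $I=\emptyset$ the component $\beta_\emptyset$ is just a form in $\Omega^i_X(\log D)$, and the gluing conditions $i_I^*\beta_\emptyset = \beta_I$ (imposed, up to homotopy, by $C^i_X$) cut it down exactly to the subsheaf $\Omega^i_X\subset \Omega^i_X(\log D)$ of forms without poles. This is the ``pair $\alpha = (\beta\oplus\gamma)$ with $\beta|_D = \gamma$'' description advertised in the introduction.

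The cleanest way to make this rigorous is to filter $A_X(n)$ by the codimension $|I|$ of the strata and run a spectral-sequence / inductive argument. I would set up the filtration so that the associated graded splits the $B$-contribution of each stratum $D_I$ against the $C$-contribution $\Omega^{\bullet-1}_X(\log D)|_{D_I}$ that connects it to strata of one lower codimension, and then check that each such graded piece is acyclic \emph{except} the one corresponding to the global forms $\Omega^{\ge n}_X$ sitting on $I=\emptyset$. The acyclicity on each stratum of positive codimension reduces to the standard short exact sequence $0\to \Omega^i_X \to \Omega^i_X(\log D)\oplus \Omega^i_{D}\to \dots$ relating pole-free forms, log-forms and forms on the divisor, which is exactly the local computation that $\Omega^i_D = \Omega^i_X/\Omega^i_X(\log D)(-D)$ encodes. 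I expect this bookkeeping over all $I$ — keeping track of which restriction maps are the connecting maps and verifying that every ``correction'' term $i_I^*\beta - \beta_I$ contributes an acyclic piece — to be the main obstacle, since the combinatorics of the strict-normal-crossings stratification must be organized so the only surviving cohomology is that of $\Omega^{\ge n}_X$ via $\iota$.

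\textbf{The direct image $Rq_* A(n) = A_X(n)$.}
Here I would argue term-by-term along the decomposition $A^i = B^i\oplus C^i$. Each summand of $B^i$ and $C^i$ is of the form $(\mu_I)_*(q^F_I)^*(\text{log-forms on } D_I)$, and $\mu_I = \lambda(\emptyset\subset I)$ factors through the flag-bundle projection $q^F_I: Q^F_I\to D_I$ followed by the inclusion $i_I: D_I\to X$. Since the flag bundle is a smooth proper morphism with $Rq_*$ of the structure sheaf (and of the relevant pullbacks) trivial — precisely the property recalled before Claim~\ref{claim3:1} that $Rq_*\sK^\bullet = \Omega^\bullet_X(\log D)$ and $R(q^F_I)_*(q^F_I)^*\sF = \sF$ by the projection formula and $R(q^F_I)_*\sO = \sO_{D_I}$ — I would get $R(q^F_I)_*(q^F_I)^*\Omega^i_{D_I}(\log(D^I\cap D_I)) = \Omega^i_{D_I}(\log(D^I\cap D_I))$ and likewise for the $C$-terms. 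Applying $(i_I)_*$ then matches $B^i$ with $B^i_X$ and $C^i$ with $C^i_X$ summand by summand. The final check is that the differentials $D_\tau$ push forward to $D_X$: this is immediate for the $d_\tau\beta_I$ part because $\tau$ induces the de Rham differential on the graded pieces by flatness, and for the correction term $(-1)^i(\mu_I^*\beta - \beta_I)$ it follows since $\mu_I^* = (q^F_I)^* i_I^*$ is compatible with the restriction $i_I^*\beta$ after taking $Rq_*$. Because all the higher direct images vanish, the hypercohomology spectral sequence degenerates and one concludes $Rq_*A(n) = A_X(n)$ as complexes, with no convergence subtleties.
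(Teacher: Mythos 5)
Your treatment of the second assertion is essentially the paper's argument and is fine: $\mu_I$ is a closed embedding, the square \eqref{3.2} commutes, the structure sheaf of the flag bundle is relatively acyclic, so $Rq_*(\mu_I)_*(q_I^F)^*\sE=(i_I)_*\sE$ for locally free $\sE$ on $D_I$, which identifies $A(n)$ with $A_X(n)$ summand by summand, compatibly with the differentials. (A small slip: $\mu_I$ itself is a closed embedding into $Q$ and does not ``factor through $q_I^F$ followed by $i_I$''; it is the composite $q\circ\mu_I$ that equals $i_I\circ q_I^F$.)

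The gap is in the first assertion. Your opening paragraph reads the complex correctly --- the conditions $i_I^*\beta_\emptyset=\beta_I$ cut $\Omega^i_X(\log D)$ down to $\Omega^i_X$ --- but the filtration argument you then propose rests on a different, and incorrect, reading of $D_X$: the correction term $(-1)^i(i_I^*\beta-\beta_I)$ compares each $\beta_I$ with the restriction of the single component $\beta=\beta_\emptyset\in\Omega^i_X(\log D)$, not with a form on a stratum of one lower codimension. So $A_X(n)$ is not a \v{C}ech-type complex over the poset of strata; up to shift it is the cone of one map $\Omega^{\ge n}_X(\log D)\oplus\bigoplus_{I\neq\emptyset}\Omega^{\ge n}_{D_I}(\log(D^I\cap D_I))\to\bigoplus_{I\neq\emptyset}\Omega^{\ge n}_X(\log D)|_{D_I}$, $(\beta,(\beta_I))\mapsto(i_I^*\beta-\beta_I)$. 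In your codimension filtration the graded piece attached to a fixed $I\neq\emptyset$ is then the cone of the inclusion $\Omega^\bullet_{D_I}(\log(D^I\cap D_I))\hookrightarrow\Omega^\bullet_X(\log D)|_{D_I}$, which is not acyclic, so the claimed collapse of the spectral sequence --- which you yourself flag as ``the main obstacle'' and never carry out --- fails as set up. The paper needs none of this machinery: adding $D(0,\beta'_I,0)$ with $\beta'_I=(-1)^{i-1}\gamma_I$ normalizes any class in degree $i\ge n+1$ so that $\gamma_I=0$; after that, $\ker D$ on $B^i\oplus 0$ consists exactly of the $d$-closed $\beta\in\Omega^i_X(\log D)$ with $i_I^*\beta=\beta_I\in\Omega^i_{D_I}(\log(D^I\cap D_I))$ for all $I$, i.e.\ of $d$-closed pole-free forms, and the same normalization identifies the image of $D$ meeting $B^i\oplus 0$ with $d\Omega^{i-1}_X$. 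That short two-step computation is exactly the content your proposal leaves unproved.
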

\begin{proof}
 We start with the second assertion: since $\mu_I$ is a closed embedding, one has $R(\mu_I)_*=(\mu_I)_*$ on coherent sheaves. Thus by the commutativity of the diagram \eqref{3.2}, and the fact that $\sO$ on the flag varieties is relatively acyclic, one has $Rq_*(R\mu_I)_*(q_I^F)^* \sE=  (i_I)_* \sE$ for a locally free sheaf $\sE$ on $D_I$. This shows the second statement. We show the first assertion. We first show that the 0-th cohomology sheaf of $A_X(n)$ is $(\Omega^n_X)_d$. The condition  $D(\beta, \beta_I)=0$ means $d\beta=d\beta_I=0$ and $i_I^*\beta=\beta_I$. Thus $\beta\in \Omega^n_X$ and  $d\beta=0$. Assume now $i\ge n+1$. Then  modulo $DA^{i-1}(n)$, $((\beta, \beta_I), \gamma_I)$ is equivalent to 
$((\beta, \beta_I + (-1)^{i-1}d\gamma_I), 0)$. So we are back to the computation as in the case $i=n$ and 
${\rm Ker}(D)$ on $B^i\oplus 0$ is ${\rm Ker}(d)$ on $\Omega^i_X$. On the other hand, by the same reason,  $D(B^{i-1}\oplus C^{i-1})=D(B^{i-1}\oplus 0)$, and $D(B^{i-1}\oplus 0)\cap (B^i\oplus 0)=d(\Omega^i_X)$. This finishes the proof. 
\end{proof}
\begin{prop} \label{prop3:8}
 The map $q^*: AD^n(X)_\infty=\H^n(X, \sK_n\xrightarrow{d\log} \Omega^n_X\xrightarrow{d} \ldots \xrightarrow{d} \Omega^{{\rm dim}_X})\to \H^n(Q, \sK_n\Omega^\infty)$ is injective. The classes $c_n((q^*(E,\nabla,\Gamma))\in \H^n(Q, \sK_n\Omega^\infty)$ in definition \ref{defn3:5} are of the shape $q^*c_n((E,\nabla, \Gamma))$ for uniquely defined classes 
$c_n((E,\nabla, \Gamma))\in \H^n(X, \sK_n\xrightarrow{d\log} \Omega^n_X\xrightarrow{d} \ldots \to \Omega^{{\rm dim}_X})$.
\end{prop}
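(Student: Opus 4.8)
The plan is to deduce both statements from a splitting principle for the flag bundle $q\colon Q\to X$, using Proposition~\ref{prop3:7} as the bridge between the stratified complex $A(n)$ on $Q$ and ordinary forms on $X$. First I would reinterpret the source group. By Proposition~\ref{prop3:7} the map $\iota\colon\Omega^{\ge n}_X\to A_X(n)$ is a quasi-isomorphism, and it is compatible with the two $d\log$ maps out of $\sK_n$, both of which land in the summand $B_X^n=\oplus_I (i_I)_*\Omega^n_{D_I}(\log(D^I\cap D_I))$ by restriction of Milnor symbols to the strata. Hence the gluing $[\sK_n\xrightarrow{d\log}\Omega^{\ge n}_X]$ is quasi-isomorphic to $[\sK_n\xrightarrow{d\log}A_X(n)]=:\sK_n\Omega^\infty_X$, so that $AD^n(X)_\infty\cong\H^n(X,\sK_n\Omega^\infty_X)$. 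Since $Rq_*$ is exact, the identity $Rq_*A(n)=A_X(n)$ of Proposition~\ref{prop3:7} gives $Rq_*\sK_n\Omega^\infty_Q=[Rq_*\sK_n\to A_X(n)]$, and $q^*$ is the map on $\H^n$ induced by the unit $\sK_n\to Rq_*\sK_n$ over the identity of $A_X(n)$.

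For injectivity I would invoke the iterated projective bundle formula. The only input beyond Proposition~\ref{prop3:7} is the projective bundle formula for the Zariski sheaf $\sK_n$, which exhibits $\sK_n\to Rq_*\sK_n$ as a split injection onto the degree-zero summand. Assembling this with $Rq_*A(n)=A_X(n)$ through the hypercohomology spectral sequence of the gluing $[\sK_n\to A(n)]$ shows that $q^*\colon\H^n(X,\sK_n\Omega^\infty_X)\to\H^n(Q,\sK_n\Omega^\infty_Q)$ is split injective. This is the first assertion, and since it is injective it yields at once the uniqueness of any preimage $c_n((E,\nabla,\Gamma))$.

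It remains to prove existence, namely that the class of Definition~\ref{defn3:5} lies in the image of $q^*$. This is the splitting principle proper: the sum $\sum_{s_1<\cdots<s_n}\xi^{s_1}(\nabla)\cup\cdots\cup\xi^{s_n}(\nabla)$ is the $n$-th elementary symmetric function in the $\tau$-enriched Chern roots $\xi^s(\nabla)$ of Lemma~\ref{lem3:4}, hence is invariant under the symmetric group $S_N$ permuting the rank-one subquotients of $q^*E$. By the same projective bundle formula the $S_N$-invariant part of $\H^n(Q,\sK_n\Omega^\infty_Q)$ is exactly the image of $q^*$, so the class descends to a well-defined $c_n((E,\nabla,\Gamma))\in AD^n(X)_\infty$; compatibility with the map $(\sK_n\Omega^\infty_Q)_0\to\sK_n\Omega^\infty_Q$ and with the product \eqref{3.6} used in Definition~\ref{defn3:5} is checked along the way.

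The hard part will be the splitting principle in this enriched derived setting: one must upgrade the projective bundle formula from $\sK_n$ and from coherent forms to the glued complex $\sK_n\Omega^\infty_Q$, and verify that the $S_N$-invariants coincide with $q^*\H^n(X,\sK_n\Omega^\infty_X)$, all while tracking the $\tau$-differential and the correction terms in the $C$-component of $A(n)$ through $Rq_*$. Once the freeness of $\H^\bullet(Q,\sK_n\Omega^\infty_Q)$ as a module over $\H^\bullet(X,\sK_n\Omega^\infty_X)$ is in hand, with the $\xi^s(\nabla)$ as tautological generators, both assertions follow as in the pole-free case treated in \cite{ADC}.
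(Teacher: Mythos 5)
Your skeleton is right (the quasi-isomorphism $\iota$ of Proposition \ref{prop3:7}, the identity $Rq_*A(n)=A_X(n)$, and the projective bundle formula for $\sK_n$), but the injectivity step has a genuine gap. Split injectivity of $\sK_n\to Rq_*\sK_n$ plus $Rq_*A(n)=A_X(n)$ does \emph{not} by itself give injectivity on the hypercohomology of the glued complex. Writing the long exact sequence
$H^{n-1}(Q,\sK_n)\to \H^{n-1}(A(n))\to \H^n(\sK_n\Omega^\infty_Q)\to H^n(Q,\sK_n)$
and comparing with the one over $X$, a class of $\H^n(\sK_n\Omega^\infty_X)$ represented by an element of $\H^{n-1}(A_X(n))=\H^{n-1}(A(n))$ could die upstairs if that element lies in the image of the ``new'' summand of $H^{n-1}(Q,\sK_n)$, i.e.\ the part generated by the $[\xi^s]$ over $H^*(X,\sK_*)$. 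One must show that this new part maps to zero (or at least into the image of $H^{n-1}(X,\sK_n)$) in $\H^{n-1}(A(n))$. This is exactly where the paper invokes Lemma \ref{lem3:4}: each $[\xi^s]$ lifts to a class $\xi^s(\nabla)\in\H^1(Q,(\sK_1\Omega^\infty_Q)_0)$, which forces the boundary map to kill the complement. Your proposal only uses Lemma \ref{lem3:4} for the existence half, so as written the injectivity claim is unsupported.

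For existence you defer to an ``enriched splitting principle'' identifying the image of $q^*$ with the $S_N$-invariants of $\H^n(Q,\sK_n\Omega^\infty_Q)$, and you yourself flag this as the hard unproven part. The paper does not need it: once the diagram chase above is done, it yields an exact sequence $0\to\H^n(\sK_n\Omega^\infty_X)\to\H^n(\sK_n\Omega^\infty_Q)\to H^n(Q,\sK_n)/H^n(X,\sK_n)$, so it suffices to check that the image of $c_n(q^*(E,\nabla,\Gamma))$ in $H^n(Q,\sK_n)$, namely $\sum c_1(\xi^{s_1})\cup\cdots\cup c_1(\xi^{s_n})$, lies in $H^n(X,\sK_n)$ --- which is the \emph{classical} splitting principle for Chow groups, with no enrichment required. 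I would redo both halves along these lines rather than trying to establish freeness of $\H^\bullet(Q,\sK_n\Omega^\infty_Q)$ over $\H^\bullet(X,\sK_n\Omega^\infty_X)$.
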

\begin{proof}
One has a commutative diagram of long exact sequences
\ga{3.8}{\xymatrix{ H^{n-1}(Q,\sK_n) \ar[r] & \H^{n-1}(A(n)) \ar[r] & \H^n(\sK_n\Omega^\infty_{Q}) \ar[r] & H^n(Q, \sK_n)\\
\ar[u]^{{\rm inj}} H^{n-1}(X,\sK_n) \ar[r] & \ar[u]^{=} \H^{n-1}(A(n)_X) \ar[r] & \ar[u] \H^n(\sK_n\Omega^\infty_{X}) \ar[r] & \ar[u]^{{\rm inj}} H^n(X, \sK_n)
}}
where $\sK_n\Omega^\infty_{X}=\sK_n\xrightarrow{d\log} \Omega^n_X\xrightarrow{d} \ldots \xrightarrow{d}\Omega_X^{{\rm dim}X}$. We write $H^{i}(Q, \sK_j)=H^i(X, \sK_j) \oplus {\rm rest}$, where the rest is divisible by the classes of powers of the $[\xi^s] \in H^1(Q, \sK_1)$, with coefficients in some $H^a(X, \sK_b)$.  But $[\xi^s]$ comes by lemma \ref{lem3:4} from a class 
$\xi^s(\nabla) \in \H^1(Q, (\sK_1\Omega^\infty_Q)_0)$. Consequently, the image of ${\rm rest}$ in 
$\H^{i}(A(n))$ dies.  We conclude that one has an exact sequence 
$0\to \H^n(\sK_n\Omega^\infty_{X}) \to \H^n(\sK_n\Omega^\infty_{Q}) \to 
\H^n(X, R^\bullet q_*\sK_n/q_*\sK_n)$.  By the standard splitting principle for Chow groups, one has $H^n(Q, \sK_n)/H^n(X, \sK_n)= \H^n(X, R^\bullet q_*\sK_n/q_*\sK_n)$, and  $$\sum_{s_1<s_2\ldots <s_n} c_1(\xi^{s_1})\cup \cdots \cup c_1(\xi^{s_n}) \in {\rm Im}( CH^n(X)\subset CH^n(Q)).$$  By lemma \ref{lem3:4}, $\xi^s(\nabla)
\in \H^1(Q, (\sK_1\Omega^\infty)_0)$ maps to $c_1(\xi^s)\in H^1(Q, \sK_1)$. Thus
we conclude that 
$c_n(q^*(E,\nabla, \Gamma))\in {\rm Im}( \H^n(\sK_n\Omega^\infty_{X})\subset \H^n(\sK_n\Omega^\infty_{Q})$. This finishes the proof.

\end{proof}
\begin{thm} \label{thm3:9}
Let $ X\supset U$ be a smooth (partial) compactification of a   variety $U$ defined over a characteristic 0 field, such that $D=\sum_j D_j= X\setminus U$ is a strict normal crossings divisor. Let $(E, \nabla)$ be a flat connection with logarithmic poles along  $D$ such that its residues $\Gamma_j$ along $D_j$ are all nilpotent.  Then the classes $c_n(( E,  \nabla)) \in AD^n( X,  D)$ lift to well defined classes
$c_n(( E,  \nabla, \Gamma)) \in AD^n( X)$. They are functorial: 
if  $f: Y\to X$  with $Y$ smooth, such that $f^{-1}(D)$ is a normal crossings divisor, \'etale over its image $\subset D$, then $f^*c_n((E,\nabla, \Gamma))=c_n(f^*(E,\nabla,\Gamma))$
in $AD^n(Y)$. If $D'\supset D$ is a normal crossings divisor and $\nabla'$ is the connection $\nabla$, but considered with logarithmic poles along $D'$, thus with trivial residues along the components of $D'\setminus D$, then $c_n((E,\nabla,\Gamma))=c_n((E,\nabla',\Gamma'))$. 
The classes $c_n((E,\nabla,\Gamma))$  satisfy the Whitney product formula. 
In addition,  $c_n(( E,  \nabla, \Gamma))$ lies  in the subgroup $AD^n_{\infty}( X)=\H^n( X, \sK_n\xrightarrow{d\log} \Omega^n_{\bar X}\xrightarrow{d} \Omega^{n+1}_{ X}\to \ldots \xrightarrow{d} \Omega^{{\rm dim}(X)}_{ X})\subset AD^n( X) $ of classes mapping to 0 in $H^{0}( X, \Omega^{ 2n}_X)$. The restriction to $AD^n_\infty(D_I)$ of $c_n((E,\nabla, \Gamma))$ is $c_n ((gr(F_I^\bullet),\nabla_I, \Gamma_I))$ where $(gr(F_I^\bullet),\nabla_I, \Gamma_I)$ is 
the canonical filtration  (see \ref{claim2:4} and \ref{defn2:5}). .

\end{thm}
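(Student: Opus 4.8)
I would start from Proposition \ref{prop3:8}, which already produces $c_n((E,\nabla,\Gamma))$ as the unique class in $AD^n_\infty(X)=\H^n(X,\sK_n\xrightarrow{d\log}\Omega^n_X\to\cdots\to\Omega^{\dim X}_X)$ whose pull-back to $Q$ equals $c_n(q^*(E,\nabla,\Gamma))$ of Definition \ref{defn3:5}. The membership in $AD^n_\infty(X)$, i.e. the vanishing in $H^0(X,\Omega^{2n}_X)$, is then built in: the long exact sequence comparing the complex defining $AD^n_\infty(X)$ with its truncation defining $AD^n(X)$ identifies $AD^n_\infty(X)$ with the kernel of the Chern-form map $AD^n(X)\to H^0(X,\Omega^{2n}_X)$, because the complementary complex $[\Omega^{2n}_X\to\cdots\to\Omega^{\dim X}_X]$ lives in degrees $>n$ and so contributes nothing to $\H^n$. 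For the lifting I would use the projection of $A_X(n)$ onto its $I=\emptyset$ summand $\Omega^{\ge n}_X(\log D)$ of Definition \ref{defn3:6}: since the differential $D_X$ acts on this summand by the plain $d$, the projection is a map of complexes compatible with $d\log$ out of $\sK_n$, hence induces $AD^n_\infty(X)\to AD^n(X,D)$; and because $\iota$ of \eqref{3.7} sends $\alpha$ to $i_I^*\alpha\oplus 0$, this coincides with the restriction to $AD^n_\infty(X)$ of the canonical map $AD^n(X)\to AD^n(X,D)$. On $Q$ the same projection sends the refined splitting $\tau_I$ back to $\tau=\tau_\emptyset$ (the outer triangle of \eqref{3.4}), so the cup-product cocycle of Definition \ref{defn3:5} maps to the cocycle defining $c_n((E,\nabla))$; injectivity of $q^*$ then gives the lifting on $X$.

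The functoriality and the comparison with a larger divisor I would obtain by transporting the whole construction. For $f\colon Y\to X$ with $f^{-1}(D)$ a normal crossings divisor \'etale over its image, \'etaleness preserves Deligne's constant-nilpotent local normal form used in Claims \ref{claim2:1}--\ref{claim2:4}, so $f^*\Gamma_j$ is again nilpotent and the canonical filtrations of Definition \ref{defn2:5} pull back to those of $f^*(E,\nabla,\Gamma)$. Hence the flag bundles, the factorizations $\tau_I$ of Claim \ref{claim3:1}, and the complexes $A(n)$, $A_X(n)$ all pull back, and $f^*c_n((E,\nabla,\Gamma))=c_n(f^*(E,\nabla,\Gamma))$ follows by naturality and $q^*$-injectivity. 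For $D'\supset D$ with trivial residue along each component of $D'\setminus D$, the kernel $F^1$ from the canonical-filtration construction along such a component is all of $E$, so the filtrations along strata meeting only the new components are trivial and the corresponding summands of $A_X(n)$ contribute no correction; comparing the two stratified complexes term by term shows $c_n((E,\nabla,\Gamma))=c_n((E,\nabla',\Gamma'))$.

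The Whitney formula I would read off from the splitting principle already internal to the construction. Since $c_n(q^*(E,\nabla,\Gamma))=\sum_{s_1<\cdots<s_n}\xi^{s_1}(\nabla)\cup\cdots\cup\xi^{s_n}(\nabla)$ is the $n$-th elementary symmetric function in the rank-one classes $\xi^s(\nabla)$ of Lemma \ref{lem3:4}, and the product \eqref{3.5}--\eqref{3.6} is associative and graded-commutative, an extension $0\to E'\to E\to E''\to 0$ compatible with $\nabla$ and with the nilpotent residues refines the flag of $E$ so that the collection $\{\xi^s(\nabla)\}$ splits into those of $(E',\nabla')$ and $(E'',\nabla'')$. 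The total class on $Q$ then factors, and $q^*$-injectivity descends this to $c((E,\nabla,\Gamma))=c((E',\nabla',\Gamma'))\cdot c((E'',\nabla'',\Gamma''))$ on $X$.

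Finally, for the restriction to $D_I$ I would restrict $A_X(n)$ to the stratum and observe that its $I$-summand $\Omega^\bullet_{D_I}(\log(D^I\cap D_I))$, carrying the splitting $\tau_I$ and the rank-one pieces $(\xi_I^F)^s$ of Claim \ref{claim3:1}, is precisely the data that the same construction, applied to $(D_I,\,D^I\cap D_I)$ and the canonical graded object $(gr(F^\bullet_I),\nabla_I,\Gamma_I)$ of Definition \ref{defn2:5}, uses to build $c_n((gr(F^\bullet_I),\nabla_I,\Gamma_I))\in AD^n_\infty(D_I)$. I expect this identification to be the main obstacle, because the statement is genuinely recursive: one must match the restriction of the stratified differential $D_X$ to $D_I$ with the intrinsic differential of the analogous complex on $D_I$, and check that the flag bundle $Q_I^F$ and the splitting $\tau_I$ coincide with those produced directly from $gr(F^\bullet_I)$. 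This is exactly where the coherent compatibility of the successive refinements from Claim \ref{claim2:4} (the composition rule for $K\subset J\subset I$) is needed; verifying that the correction terms $\mu_I^*\beta-\beta_I$ restrict correctly and that no cross-terms between distinct strata survive is the delicate bookkeeping that pins down the restriction formula.
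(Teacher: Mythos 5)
Your proposal is correct and follows essentially the same route as the paper: the class and its membership in $AD^n_\infty(X)$ come from Proposition \ref{prop3:8}, functoriality from the compatibility of the canonical filtrations with pull-back, the Whitney formula from the splitting-principle argument of \cite{E} and \cite{E2} carried through the tower of flags, and the lifting and restriction statements from the explicit cocycle $(g^s_{\alpha\beta},\mu_I^*\omega^s\oplus 0)$ of Lemma \ref{lem3:4} read off on the $I=\emptyset$ summand and on the $D_I$-summands of $A_X(n)$ respectively. In fact your write-up supplies more detail than the paper's own (very terse) proof, and your identification of the recursive matching of the stratified complex on $D_I$ with the intrinsic construction for $(gr(F_I^\bullet),\nabla_I,\Gamma_I)$ as the delicate point is exactly where the paper leans on Claim \ref{claim2:4} and Lemma \ref{lem3:4}.
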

\begin{proof}
The construction is the proposition \ref{prop3:8}. 
We discuss functoriality. If $f$ is as in the theorem, then the filtrations $F_I^\bullet$
for $(E,\nabla)$  restrict to the filtration for $f^*(E,\nabla)$. 
Whitney product formula is proven exactly as in \cite[2.17,2.18]{E} and \cite[Theorem~1.7]{E2}, even if this is more cumbersome,
as we have in addition to follow the whole tower of $F_I^\bullet$. 
 Finally, the last property
follows immediately from the definition of $\xi^s(\nabla)$ in lemma \ref{lem3:4}.
\end{proof}
\begin{thm} \label{thm3:9}Assume given $k\subset \C$ and 
 $\Gamma$ is nilpotent. Then the classes $\hat{c}_n((E,\nabla))\in H^{2n}((X\setminus D)_{{\rm an}}, \C/\Z(n))$ defined in \cite{E},  come from well defined classes $\hat{c}_n
((E,\nabla, \Gamma))\in H^{2n-1}(X_{{\rm an}}, \C/\Z(n))$.
Furthermore $\hat{c}_n((E,\nabla,\Gamma))$ fulfill the same functoriality, additivity, restriction and enlargement of $\nabla$ properties as $c_n((E,\nabla, \Gamma)) \in AD^n_\infty(X)$.
\end{thm}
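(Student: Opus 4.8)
The plan is to obtain the classes $\hat{c}_n((E,\nabla,\Gamma))$ as the image, under the regulator map, of the algebraic classes $c_n((E,\nabla,\Gamma)) \in AD^n_\infty(X)$ constructed in the preceding theorem, and then to transport every stated property across this map by naturality. Concretely, I recall from \cite{E2} and \cite{ADC} the regulator $r: AD^n(X) \to \hat{H}^{2n-1}(X_{{\rm an}}, \C/\Z)$, together with the fact already noted in the introduction that it restricts to a map $r_\infty: AD^n_\infty(X) \to H^{2n-1}(X_{{\rm an}}, \C/\Z(n))$. First I would recall why the target of $r_\infty$ is the pure group rather than the full group of differential characters: by the preceding theorem a class in $AD^n_\infty(X)$ maps to $0$ in $H^{0}(X, \Omega^{2n}_X)$, so the associated closed $2n$-form vanishes, whence its differential character has trivial form component and therefore lands in $H^{2n-1}(X_{{\rm an}}, \C/\Z(n)) \subset \hat{H}^{2n-1}(X_{{\rm an}}, \C/\Z)$. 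Setting $\hat{c}_n((E,\nabla,\Gamma)) := r_\infty(c_n((E,\nabla,\Gamma)))$ then produces classes in exactly the required group.

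Next I would check that these classes lift the Cheeger-Chern-Simons classes on $U = X\setminus D$. The regulator is natural with respect to the open restriction $j: U \to X$, so it fits into a commutative square relating the restriction $AD^n_\infty(X) \to AD^n(U)$ with $r_\infty$ on $X$ and $r$ on $U$. By the construction underlying the preceding theorem, the top horizontal map sends $c_n((E,\nabla,\Gamma))$ to $c_n((E,\nabla)|_U) \in AD^n(U)$, whose regulator image is exactly $\hat{c}_n((E,\nabla)|_U)$ by the definition of the classes in \cite{E}. Commutativity of the square then yields that the restriction of $\hat{c}_n((E,\nabla,\Gamma))$ to $U$ equals $\hat{c}_n((E,\nabla)|_U)$, which is the asserted lifting.

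Finally, the functoriality, additivity, restriction and enlargement properties each follow by applying $r_\infty$ to the corresponding identities already established for $c_n((E,\nabla,\Gamma))$ in $AD^n_\infty$. For a morphism $f: Y\to X$ as in the preceding theorem one has $f^* r_\infty(c_n) = r_\infty(f^* c_n) = r_\infty(c_n(f^*(E,\nabla,\Gamma)))$, by naturality of the regulator combined with the algebraic functoriality. The Whitney product formula is the image under the multiplicative regulator of the algebraic Whitney formula, using the algebra structure of the differential characters. The invariance under enlarging $D$ to $D'$ is the $r_\infty$-image of the identity $c_n((E,\nabla,\Gamma))=c_n((E,\nabla',\Gamma'))$. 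The restriction to a stratum is computed from naturality of $r_\infty$ for the closed embedding $D_I \hookrightarrow X$ together with the identification of $c_n((E,\nabla,\Gamma))|_{D_I}$ with $c_n((gr(F_I^\bullet),\nabla_I,\Gamma_I))$ from the preceding theorem.

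I expect the main obstacle to be precisely this compatibility statement: that the regulator of \cite{E2}, \cite{ADC} is genuinely natural for both the open restriction $j: U\to X$ and the closed restrictions to the strata $D_I$, and that these naturalities are compatible with the passage through the mixed complexes $A(n)$ and $A_X(n)$ by which the algebraic classes are defined. Once this naturality is in place, the remaining verifications are formal diagram chases transporting the $AD^n_\infty$-level identities to the level of $H^{2n-1}(X_{{\rm an}}, \C/\Z(n))$.
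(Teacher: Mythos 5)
Your proposal is correct and follows essentially the same route as the paper: the paper's proof simply applies the regulator map $AD^n(X)\to H^{2n-1}(X_{{\rm an}},\C/\Z(n))$ of \cite{E2}, noting it is an algebra homomorphism, and transports all properties by naturality. Your write-up just spells out the naturality checks that the paper leaves implicit.
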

\begin{proof}
We just have to use the regulator map  $AD^n(X)\to H^{2n-1}(X_{{\rm an}}, \C/\Z(n))$, which is an algebra homomorphism, and which  defined in \cite[Theorem~1.7]{E2}.
 Of course we can also follow  the same  construction directly in the analytic category. 
\end{proof}
\bibliographystyle{plain}
\renewcommand\refname{References}

\end{document}